\documentclass{amsart}

\usepackage{color}

\numberwithin{equation}{section}
\def\di{\displaystyle}
\def\O{\Omega}

\def\S{\Sigma} 
\def\n{\nabla}
\def\nb{\bar\nabla}
\def\P{\partial}

\def\bn{\bar \nabla}
\def\bd{\bar \Delta}
\def\a{\alpha}
\def\b{\beta}
\def\n{\nabla}

\def\tr{\mathrm{tr}}
\def\o{\omega}
\def\O{\Omega}

\def\a{\alpha}
\def\b{\beta}
\def\g{\gamma}
\def\d{\delta}

\def\l{\lambda}

\def\De{\Delta}
\def\n{\nabla}
\def\<{\langle}
\def\>{\rangle}
\def\div{{\rm div}}
\def\De{\Delta}
\def\n{\nabla}

\def\nn{\nonumber\\}

\def\tr{\mathrm{tr}}
\def\o{\omega}
\def\O{\Omega}

\def\a{\alpha}
\def\b{\beta}
\def\g{\gamma}
\def\d{\delta}
\def\l{\lambda}

\newtheorem{theorem}{Theorem}[section]
\newtheorem{lemm}[theorem]{Lemma}

\newtheorem{defi}[theorem]{Definition}
\newtheorem{theo}[theorem]{Theorem}
\newtheorem{prop}[theorem]{Proposition}

\newtheorem{rema}[theorem]{Remark}
\newtheorem{coro}[theorem]{Corollary}

\numberwithin{equation}{section}

\begin{document}
\title[An integral formula for affine connections]{An integral formula for affine connections}
\author{Junfang Li and Chao Xia}

\address{Department of Mathematics\\
University of Alabama at Birmingham\\
Birmingham, AL 35294}
\email{jfli@uab.edu}

\address{School of Mathematical Sciences\\
Xiamen University\\
361005, Xiamen, P.R. China}
\email{chaoxia@xmu.edu.cn}
\thanks{Research of CX is  supported in part by NSFC (Grant No. 11501480) and the Fundamental Research Funds for the Central Universities (Grant No. 20720150012). }


\date{}

\dedicatory{}

\commby{}

\begin{abstract}
    In this article, we introduce a $2$-parameter family of affine connections and derive the Ricci curvature. We first establish an integral Bochner technique. On one hand, this technique yields a new proof to our recent work in \cite{LX} for substatic manifolds. On the other hand, this technique leads to various geometric inequalities and eigenvalue estimates under a much more general Ricci curvature conditions. The new Ricci curvature condition interpolates between static Ricci tensor and $1$-Bakry-\'Emery Ricci, and also includes the conformal Ricci as an intermediate case. 
\end{abstract}

\maketitle

\section{Introduction}

The classical Reilly formula is actually an integral Bochner formula for gradient vector fields on manifolds with boundary in references. It has been proven to be a quite useful tool in differential geometry. 

Motivated by a work of Qiu and the second author \cite{QX}, we have established a generalized Reilly type formula in previous work \cite{LX}. Such a generalization enabled us to prove a Heintze-Karcher-Ros-Brendle type inequality under a sub-static condition. Such kind of inequality, which could lead to an Alexandrov type rigidity theorem, has been proved first by Brendle \cite{Br}. See also recent work by Wang-Wang \cite{WW}. Moreover, the general formula has been used to prove several geometric inequalities in \cite{LX} and also applied by Chen-Wang-Wang-Yau \cite{CWWY} to prove the stability of quasi-local energy with respect to a static spacetime.

The formula was proved via very basic integration-by-parts with respect to the Levi-Civita connection, although the computation is complicated and tedious. The key point is that we introduced a ``weight'' function $V$, which was motivated by Brendle and Brende-Hung-Wang \cite{Br, BHW}.

In this article, we  adapt a new point of view  to recover the formula in \cite{LX}. We find that  the formula in \cite{LX} is indeed an integral Bochner formula for some special vector fields with respect to a special {\it torsion-free affine }connection instead of the Levi-Civita connection. 
Moreover, this turns out to be a general phenomenon that a wide class of {\it torsion-free affine }connections give rise to a class of Reilly type formulas.

\

Let $(M^n, \bar g)$ be an $n$-dimensional smooth Riemannian manifold and $\nb$ be the Levi-Civita connection of $\bar g$. Let $V=e^u$ be a positive smooth function on $M$, where $u$ is a smooth function on $M$. We call $(M, \bar g, V)$ a Riemannian triple.  

For $\a, \g\in \mathbb{R}$, we define a $2$-parameter family of affine connections: given two vector fields $X, Y$ on $M$, let
\begin{eqnarray}
D^{\a,\g}_XY=
\nb_XY+\a du(X)Y+\a du(Y)X +\g \bar g(X,Y)\nb u.
        \label{defi D}
\end{eqnarray}
For simplicity, we often omit the superscript $\a,\g$ when it is clear in the context.
The Ricci curvature under $D^{\a,\g}$ is (see Proposition \ref{thm Ricci} below)
\begin{align}  \label{Ricci}
Ric^{D}:=&Ric-[(n-1)\a+\g]\bar\nabla^2 u+\big[(n-1)\a^2-\g^2\big]du\otimes du\nonumber\\
        &+\left[\g \bar\De u +(\g^2+(n-1)\a\g)|\nb u|^2\right]\bar g.
  \end{align}
  
There are two trivial cases among all $D^{\a,\g}$.  One is the Levi-Civita connection for $\bar g$ when $\a=\g=0$, while the other is  the Levi-Civita connection for the conformal metric $e^{2\a u} \bar g$ when $\a=-\g$. For other choices of $\a$ and  $\g$, $D^{\a,\g}$ may not be a Levi-Civita connection for any Riemannian metric.

For the case $\a=0, \g=1$, one sees from \eqref{Ricci} that
\begin{eqnarray}
Ric^{D^{0,1}}=  Ric-\frac{\bar \nabla^2 V}{V}+\frac{\bar \Delta V}{V}\bar g,
\end{eqnarray}
where $Ric$ is the usual Ricci curvature for $\bar g$. We call $Ric^{D^{0,1}}$ {\em static Ricci tensor}. A Riemannian triple $(M, \bar g, V)$ satisfying $VRic^{D^{0,1}}=0$ is referred to a static manifold in the literature, see e.g. \cite{Cor}.

For the case $\a=\frac{1}{n-1}, \g=0$, one  sees from \eqref{Ricci} that
\begin{eqnarray}
Ric^{D^{\frac{1}{n-1},0}}=  Ric-\mathrm{\bar \nabla^2} u +\frac{1}{n-1}d u\otimes d u.
\end{eqnarray}
This is in fact the {\em $1$-Bakry-\'Emery Ricci tensor} in the literature which was introduced by Bakry and \'Emery \cite{BaE}. The fact that this affine connection gives rise to the $1$-Bakry-\'Emery Ricci tensor  has also been observed by Wylie-Yeroshkin \cite{WY} in their recent studies of manifolds with density.

\

 The main result of this paper is the following Reilly type integral formula.

\begin{theo} \label{theo RF0}    
Let $(M^n, \bar g, V=e^u)$ be an $n$-dimensional Riemannian triple and $\a,\g\in \mathbb{R}$. Let   $D=D^{\a,\g}$ be the  affine connection defined as in (\ref{defi D}) and $\tau=(n+1)\a+\g$. Let $\phi$ be a smooth function on a bounded domain $\O\subset M$ with smooth boundary $\S$. Then the following integral formula holds:
         \begin{align}         \label{formula RF 1}
        &\di\int_{\O}V^\tau \left[\left|\bd^D \phi\right|^2-\left|\bn^{2,D} \phi\right|_{\bar g}^2 \right] -V^\tau Ric^{D}( \nb^D \phi, \nb^D \phi) d\O \\ =&\di\int_{\S}V^{\tau}\left[H^D\<\bn^D \phi,\nu\>^2+\left(h-\g u_\nu g\right)(\n^D\phi, \n^D \phi)-2V^{-\g}\<\n^D \phi, \n^D(V^\g \phi_\nu)\>\right]dA.\nonumber
    \end{align}
\end{theo}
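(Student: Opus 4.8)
\emph{Proof strategy.} The plan is to read \eqref{formula RF 1} as the integrated Bochner formula for $D$, mirroring the classical derivation of Reilly's formula but with $\bn$ replaced by $D$ and the ordinary divergence theorem replaced by a weighted one. Everything hinges on one structural fact about the weight $V^\tau$. For a vector field $X$ put $\div^D X=\tr_{\bar g}(Y\mapsto D_YX)$; a computation in a $\bar g$-orthonormal frame using \eqref{defi D} gives $\div^D X=\div_{\bar g}X+\big[(n+1)\a+\g\big]\<\bn u,X\>=\div_{\bar g}X+\tau\<\bn u,X\>$. Since $\bn V^\tau=\tau V^\tau\bn u$, this is the same as
\[
\div_{\bar g}(V^\tau X)=V^\tau\,\div^D X,\qquad\text{so that}\qquad \int_\O V^\tau\,\div^D X\,d\O=\int_\S V^\tau\<X,\nu\>\,dA .
\]
This is exactly why the weight $V^\tau$ with $\tau=(n+1)\a+\g$ occurs: it is the unique power of $V$ that turns $\div^D$ into an honest divergence, so that the weighted $D$-divergence theorem holds with no interior defect.

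For the interior I would form the $D$-analogue of the Reilly vector field. With the $D$-Hessian $(\bn^{2,D}\phi)(Y,Z)=Y(Z\phi)-(D_YZ)\phi$ (symmetric because $D$ is torsion free) and $\bd^D\phi=\tr_{\bar g}\bn^{2,D}\phi$, set $X=(\bd^D\phi)\,\bn^D\phi-(\bn^{2,D}\phi)^\sharp(\bn^D\phi)$, where $\sharp$ raises an index with $\bar g$. Computing $\div^D X$ and commuting the third $D$-derivatives brings in the curvature of $D$, which by Proposition \ref{thm Ricci} is carried by the tensor $Ric^D$ of \eqref{Ricci}, while the symmetry of the Hessian supplies the quadratic terms; the expected pointwise identity is
\[
\div^D X=(\bd^D\phi)^2-|\bn^{2,D}\phi|^2_{\bar g}-Ric^D(\bn^D\phi,\bn^D\phi).
\]
Multiplying by $V^\tau$ and applying the weighted divergence theorem above turns the whole left-hand side of \eqref{formula RF 1} into the single boundary integral $\int_\S V^\tau\<X,\nu\>\,dA$.

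For the boundary I would decompose $\bn^D\phi$ on $\S$ into its normal part $\<\bn^D\phi,\nu\>\nu$ and tangential part $\n^D\phi$, and expand $\<X,\nu\>=(\bd^D\phi)\<\bn^D\phi,\nu\>-(\bn^{2,D}\phi)(\bn^D\phi,\nu)$. The essential geometric input is the $D$-second fundamental form: for $X,Y$ tangent to $\S$ one finds $\<D_XY,\nu\>=\big(h-\g u_\nu g\big)(X,Y)$, whose $\bar g$-trace is $H^D$. Substituting this Gauss--Weingarten relation, the normal--normal Hessian contributions cancel and there appear precisely the terms $H^D\<\bn^D\phi,\nu\>^2$ and $(h-\g u_\nu g)(\n^D\phi,\n^D\phi)$, together with a remaining term of the shape $\phi_\nu\,\bd^{D,\S}\phi$, a tangential $D$-Laplacian. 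Integrating this last term by parts over the closed hypersurface $\S$ via the induced weighted divergence theorem — the boundary analogue of the identity above, which is what produces the factor $V^{\g}$ — converts it into $-2V^{-\g}\<\n^D\phi,\n^D(V^\g\phi_\nu)\>$, yielding the right-hand side of \eqref{formula RF 1}.

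The main obstacle I anticipate is accounting for the failure of metric compatibility, $D\bar g\neq0$: unlike in the Levi-Civita case, expanding $\div^D X$ in the interior generates several additional first- and zeroth-order terms (produced by the $\a\,du$ and $\g\<\bn u,\cdot\>$ pieces of $D$), and the crux is to verify that they reorganize \emph{exactly} into the tensor $Ric^D$ of \eqref{Ricci} rather than leaving a residue. A reliable cross-check, and an alternative to the intrinsic computation, is to expand every $D$-object ($\bn^{2,D}\phi$, $\bd^D\phi$, $Ric^D$, $H^D$, $h-\g u_\nu g$) in ordinary $\bn$-quantities and to reduce the whole identity to the classical Reilly formula for $\bar g$ together with $V^\tau$-weighted integration by parts; the two sides then match after a finite, if tedious, calculation. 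The same bookkeeping on $\S$ — extracting the correct weight $V^{\g}$ and the correction $-\g u_\nu g$ — is the second place where care is needed.
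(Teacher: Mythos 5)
Your overall strategy coincides with the paper's: the weighted divergence identity you isolate, $\div_{\bar g}(V^\tau X)=V^\tau\div^D X$ with $\tau=(n+1)\a+\g$, is exactly Lemma \ref{weight}, and integrating a Bochner-type identity for a Reilly vector field against $V^\tau d\O$ is precisely how Theorem \ref{theo RF0} is proved. However, there is a genuine gap in your interior step: you define the $D$-Hessian as the connection Hessian of the function, $(\bn^{2,D}\phi)(Y,Z)=Y(Z\phi)-(D_YZ)\phi$, which by \eqref{defi D} expands to $\bn^2\phi-\a\,(du\otimes d\phi+d\phi\otimes du)-\g\<\bn u,\bn\phi\>\bar g$. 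This is \emph{not} the tensor appearing in \eqref{formula RF 1}: by Definition \ref{notation}, $\bn^{2,D}\phi:=D(V^{\g-\a}\bn\phi)=V^{\g-\a}\left[\bn^2\phi+\g\,(du\otimes d\phi+d\phi\otimes du)+\a\<\bn u,\bn\phi\>\bar g\right]$ — the roles of $\a$ and $\g$ are interchanged with opposite signs, and there is a conformal factor. The discrepancy is not cosmetic. The Ricci identity that generates $Ric^D(\bn^D\phi,\bn^D\phi)$ in the Bochner computation commutes $D$-derivatives of a \emph{vector field}, so the Reilly field must be built from $DX$ for a vector field $X$ with $DX$ symmetric; a short computation shows $D(V^{c}\bn\phi)$ is symmetric only for $c=\g-\a$, which is the content of Lemma \ref{sym vf} and is exactly what forces the rescaled gradient $\bn^D\phi=V^{\g-\a}\bn\phi$ and the Hessian of Definition \ref{notation}. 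With your function-Hessian the claimed pointwise identity $\div^D X=(\bd^D\phi)^2-|\bn^{2,D}\phi|^2_{\bar g}-Ric^D(\bn^D\phi,\bn^D\phi)$ leaves a residue; indeed even your trace $\bd\phi-(2\a+n\g)\<\bn u,\bn\phi\>$ differs from the $\bd^D\phi=V^{\g-\a}\left[\bd\phi+(2\g+n\a)\<\bn u,\bn\phi\>\right]$ of the statement, so the telescoping between $f=\bd^D\phi$ and $\div^D(\bn^D\phi)$ that your vector field relies on fails.

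There is a second concrete error on the boundary. Your Gauss--Weingarten relation $\<D_XY,\nu\>=(h-\g u_\nu g)(X,Y)$ for tangential $X,Y$ is correct up to sign (with the paper's convention $h(X,Y)=\<\nb_X\nu,Y\>$ one gets $-(h-\g u_\nu g)(X,Y)$), but the assertion that its $g$-trace is $H^D$ is false: $\tr_g(h-\g u_\nu g)=H-(n-1)\g u_\nu$, whereas $H^D=H+(n-1)\a u_\nu$; these agree only in the conformal case $\a=-\g$. In the paper's proof the coefficient $H^D$ in front of $\<\bn^D\phi,\nu\>^2$ arises from the term $(n-1)\a u_\nu\phi_\nu^2$ produced by the $\a\,du$-part of the connection in the normal slot (see \eqref{theo RF equ 5}), not as the trace of a boundary affine second fundamental form, so your normal-term bookkeeping would not close as described. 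Your fallback — expanding every $D$-object in ordinary $\bn$-quantities and reducing the identity to classical Reilly plus $V^\tau$-weighted integrations by parts — is sound and is essentially the coordinate computation the paper actually performs; but as written, the intrinsic argument asserts two identities that are false for general $(\a,\g)$, and repairing it requires exactly the missing ingredient of Lemma \ref{sym vf}.
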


The notations $\nb^D$, $\bn^{2,D}$ and $\bd^D$ play the role of gradient, Hessian and Laplacian with respect to $D$, the exact definition will be given in Definition \ref{notation}. $H^D:=H+(n-1)\a u_\nu$ is the affine mean curvature, where $H$ is the usual mean curvature.\\

Formula \eqref{formula RF 1} reduces to Reilly's original formula for $\bar g$ in the case $\a=\g=0$ and for $e^{2\a u}\bar g$ in the case $\a=-\g$. 
Moreover, when $\a=0, \g=1$, it reduces to the following
\begin{theo}
   Let $(M^n, \bar g, V=e^u)$ be an $n$-dimensional Riemannian triple.  Let $\phi$ be a smooth function on a bounded domain $\O\subset M$ with smooth boundary $\S$. We have
    \begin{align}  \label{theo RF sub}
        &\di\int_{\O}V^3\Big[(\bar\De \phi+2\bar\n u\bar\n \phi)^2-|\bar\n_i\bar\n_j\phi+\bar\n_i u\bar\n\phi_j+\bar\n_j u\bar\n_i\phi|^2\Big]d\O\\
        =&\di\int_{\S}V^{3}\Big(H\phi_\nu^2+\left(h-u_\nu g\right)(\n\phi, \n\phi)-2 V^{-1}\<\n \phi, \n (V\phi_\nu)\>\Big)dA\nonumber\\
        &+\int_{\O}V^{3}(Ric-\frac{\nb^2 V}{V}+ \frac{\bar\De V}{V}\bar g)(\nb\phi,\nb\phi)d\O.\nonumber
          \end{align}
\end{theo}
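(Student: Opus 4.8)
The statement is precisely the specialization of Theorem~\ref{theo RF0} to the parameters $\a=0$, $\g=1$, so the plan is to substitute these values into formula~\eqref{formula RF 1} and to translate every $D$-quantity back into Levi-Civita data through Definition~\ref{notation}. The first thing I would record is that $\tau=(n+1)\a+\g=1$, so the universal weight $V^\tau$ appearing in \eqref{formula RF 1} is just $V$; all the remaining powers of $V$ visible in \eqref{theo RF sub} must therefore be produced by the $D$-derivatives themselves.

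Next I would unwind the differential objects. By Definition~\ref{notation}, at $\a=0,\g=1$ the $D$-gradient is $\nb^D\phi=V\,\nb\phi$, the $D$-Hessian is $\bn^{2,D}\phi=V\big(\nb^2\phi+du\otimes d\phi+d\phi\otimes du\big)$, its $\bar g$-trace is $\bd^D\phi=V\big(\bd\phi+2\<\nb u,\nb\phi\>\big)$, and, since $\a=0$, the affine mean curvature reduces to $H^D=H$. The central bookkeeping observation is that each of these objects carries exactly one hidden factor of $V$ (the weight $V^\g=V$ built into $\nb^D$), so that after squaring and multiplying by $V^\tau=V$ every term acquires the uniform weight $V^3$. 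For instance $V^\tau|\bn^{2,D}\phi|^2=V\cdot V^2|\nb^2\phi+du\otimes d\phi+d\phi\otimes du|^2$ and $V^\tau|\bd^D\phi|^2=V\cdot V^2(\bd\phi+2\<\nb u,\nb\phi\>)^2$ give the interior integrand of \eqref{theo RF sub}, while on $\S$ the term $V^\tau H^D\<\nb^D\phi,\nu\>^2=V\cdot H\cdot V^2\phi_\nu^2=V^3H\phi_\nu^2$, the term $V^\tau(h-\g u_\nu g)(\nb^D\phi,\nb^D\phi)$ becomes $V^3(h-u_\nu g)(\nb\phi,\nb\phi)$, and $-2V^\tau V^{-\g}\<\nb^D\phi,\nb^D(V^\g\phi_\nu)\>$ collapses to $-2V^3V^{-1}\<\nb\phi,\nb(V\phi_\nu)\>$. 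This reproduces exactly the right-hand boundary integrand.

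It then remains to match the curvature contribution. Transposing the term $-V^\tau Ric^D(\nb^D\phi,\nb^D\phi)$ of \eqref{formula RF 1} to the other side produces $+V\cdot V^2Ric^{D^{0,1}}(\nb\phi,\nb\phi)=V^3Ric^{D^{0,1}}(\nb\phi,\nb\phi)$, which has exactly the sign and weight displayed in \eqref{theo RF sub}. The one genuine computation is to verify that $Ric^{D^{0,1}}$, obtained by setting $\a=0,\g=1$ in \eqref{Ricci} as $Ric-\nb^2 u-du\otimes du+(\bd u+|\nb u|^2)\bar g$, equals the static Ricci tensor $Ric-\tfrac{\nb^2V}{V}+\tfrac{\bd V}{V}\bar g$; this is immediate from $V=e^u$, since $\nb^2V=V(\nb^2u+du\otimes du)$ and $\bd V=V(\bd u+|\nb u|^2)$.

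In summary, no step is conceptually hard, because Theorem~\ref{theo RF0} already carries out the entire analytic argument. The only points that demand care are (i) the consistent accounting of the powers of $V$, that is, checking that the single factor $V^\tau=V$ together with the factors of $V$ concealed in $\nb^D$, $\bn^{2,D}$ and $\bd^D$ combine to the uniform weight $V^3$, and (ii) the elementary substitution $V=e^u$ turning $Ric^{D^{0,1}}$ into its static form. I expect the main (though mild) obstacle to be keeping this $V$-bookkeeping, together with the tangential/normal splitting of $\nb^D\phi$ on $\S$, perfectly straight; there is no difficulty beyond a careful invocation of \eqref{formula RF 1}.
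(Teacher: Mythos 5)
Your proposal is correct and is exactly the paper's own route: the paper presents this theorem as the direct specialization of Theorem \ref{theo RF0} at $\a=0$, $\g=1$ (so $\tau=1$, $H^D=H$), with the powers of $V$ and the identities $\nb^2V=V(\nb^2u+du\otimes du)$, $\bd V=V(\bd u+|\nb u|^2)$ accounting for the stated form. Your bookkeeping of the weights $V^3$ and of the boundary terms checks out.
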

If we let $\phi=\frac{f}{V}$ in \eqref{theo RF sub}, then we recover Theorem 1.1  in \cite{LX} by a different method. 

\


Let us illustrate the basic idea of the proof of Theorem \ref{theo RF0}. It is well known that a local Bochner formula holds for general vector fields under an affine connection. Since the connection is in general not metric compatible, we have only a divergent form instead of the Laplacian of some function in the Bochner identity, see Proposition \ref{Bochner formula}. Nevertheless, we are able integrate this local Bochner formula to get an integral formula. To achieve an effective Reilly type formula, there are two innovative ingredients with this method. First, we choose a right volume form, which is a ``weight'', to get the divergent-free property. Second, we choose a suitable vector field $X$ which satisfies $DX$ is symmetric. It turns out that we choose $X=\bn^D \phi$ and the volume form $V^\tau d\O$, see Lemmata \ref{sym vf} and \ref{weight}.

\

With the integral formula in Theorem \ref{theo RF0} at hand, we are able to prove Heintze-Karcher type,  Minkowski type, and  Lichnerowicz type inequalities.
\begin{theo}\label{thm ineq}
Let $(M^n, \bar g, V=e^u)$ be an $n$-dimensional Riemannian triple and $\a,\g\in \mathbb{R}$. Let   $D=D^{\a,\g}$ be the  affine connection defined as in (\ref{defi D}) and $\tau=(n+1)\a+\g$.  Then we have the following results.
\begin{itemize}\item[(i)] {\bf Heintze-Karcher type inequality:} if $Ric^{D}\ge 0$ and $H^D>0$, then
\begin{equation}
         n\di\int_{\O}V^\tau d\O\le\di (n-1)\int_{\S}\frac{V^\tau}{H^D} dA.
         \label{Heintze-Karcher}
     \end{equation}
Equality in \eqref{Heintze-Karcher} holds only if $\S$ is umbilical.

\item[(ii)] {\bf Minkowski inequality:} If $Ric^{D}\ge 0$ and $h-\g u_\nu g> 0$, then
 \begin{eqnarray}\label{rm2}
        \left(\int_\S V^{\tau-\a} dA\right)^2\geq \frac{n}{n-1}\int_\Omega V^\tau d\O \int_\S H^DV^{\tau-2\a} dA,
    \end{eqnarray}
Equality in \eqref{rm2} holds only if $\S$ is umbilical.

\item[(iii)] {\bf Lichnerowicz inequality:} If $Ric^{D}\ge (n-1)V^{\a-\g} \bar g$ and \\
{\bf a) }$\S=\emptyset$, then $\lambda_1(\bd^D)\ge n;$\\
{\bf b) }$\S \neq\emptyset$ and $\S$ satisfies $H^D\ge 0$, then $\lambda_1^{Dir}(\bd^D)\ge n;$\\
{\bf c) }$\S\neq\emptyset$ and $h-\g u_\nu g\ge 0$, then $\lambda_1^{Neu}(\bd^D)\ge n.$
\end{itemize}
\end{theo}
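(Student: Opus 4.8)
The plan is to deduce all three inequalities from the integral identity \eqref{formula RF 1} by the Reilly--Ros scheme: insert a carefully chosen test function $\phi$, estimate the interior integrand from above using the pointwise trace inequality $|\bn^{2,D}\phi|_{\bar g}^2\ge \tfrac1n(\bd^D\phi)^2$ --- valid since $\bn^{2,D}\phi$ is symmetric by Lemma \ref{sym vf} with trace $\bd^D\phi$ --- and then use the sign of $Ric^D$ together with the sign of whichever boundary terms survive. The only structural fact I need is the weighted divergence identity $V^\tau\,\div^D X=\div(V^\tau X)$ underlying Lemma \ref{weight}; applied to $X=\nb^D\phi$ it gives $\int_\O V^\tau\bd^D\phi\,d\O=\int_\S V^\tau\<\nb^D\phi,\nu\>\,dA$, shows that $\bd^D$ is in divergence form $V^{-\tau}\div(V^{\tau+\g-\a}\nb\,\cdot\,)$ (hence uniformly elliptic), and makes it formally self-adjoint with respect to $V^\tau\,d\O$.

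For (i) I would solve the Dirichlet problem $\bd^D\phi=c_0$ in $\O$ with $\phi|_\S=0$. Since $\phi$ is constant on $\S$ the tangential gradient $\n^D\phi$ vanishes there, so the last two boundary integrals in \eqref{formula RF 1} drop out and only $\int_\S V^\tau H^D\<\nb^D\phi,\nu\>^2\,dA$ remains. The trace inequality and $Ric^D\ge0$ bound the interior side by $\tfrac{n-1}{n}c_0^2\int_\O V^\tau\,d\O$, while the divergence identity gives $c_0\int_\O V^\tau\,d\O=\int_\S V^\tau\<\nb^D\phi,\nu\>\,dA$. A boundary Cauchy--Schwarz estimate $\big(\int_\S V^\tau\<\nb^D\phi,\nu\>\big)^2\le \int_\S \tfrac{V^\tau}{H^D}\int_\S V^\tau H^D\<\nb^D\phi,\nu\>^2$ (legitimate because $H^D>0$) then combines the two bounds into \eqref{Heintze-Karcher}.

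For (ii) the decisive point is the Neumann data: I would solve $\bd^D\phi=c_0$ with $\phi_\nu=V^{-\g}$ on $\S$, so that $V^\g\phi_\nu\equiv1$ and the cross term $-2V^{-\g}\<\n^D\phi,\n^D(V^\g\phi_\nu)\>$ vanishes identically; the compatibility condition of the Neumann problem fixes $c_0\int_\O V^\tau\,d\O=\int_\S V^{\tau-\a}\,dA$. Discarding the nonnegative term $(h-\g u_\nu g)(\n^D\phi,\n^D\phi)\ge0$ and using $\<\nb^D\phi,\nu\>=V^{-\a}$ on $\S$, one is left with $\int_\S H^D V^{\tau-2\a}\,dA\le \tfrac{n-1}{n}c_0^2\int_\O V^\tau\,d\O$, which after substituting the value of $c_0$ is exactly \eqref{rm2}.

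For (iii) I would test \eqref{formula RF 1} with a first eigenfunction $\bd^D\phi=-\lambda_1\phi$ under the boundary condition appropriate to each case (none for (a), $\phi|_\S=0$ for (b), $\phi_\nu|_\S=0$ for (c)). In each case the boundary integral that survives is nonnegative: it is absent in (a), equals $\int_\S V^\tau H^D\<\nb^D\phi,\nu\>^2\ge0$ in (b) since $H^D\ge0$, and equals $\int_\S V^\tau(h-\g u_\nu g)(\n^D\phi,\n^D\phi)\ge0$ in (c). Combining $|\bn^{2,D}\phi|^2\ge\tfrac1n\lambda_1^2\phi^2$ with $Ric^D\ge(n-1)V^{\a-\g}\bar g$ and the Rayleigh identity $\int_\O V^\tau V^{\a-\g}|\nb^D\phi|^2\,d\O=\lambda_1\int_\O V^\tau\phi^2\,d\O$ (whose boundary correction is killed by the chosen boundary condition) yields $0\le \tfrac{n-1}{n}\lambda_1(\lambda_1-n)\int_\O V^\tau\phi^2\,d\O$, hence $\lambda_1\ge n$. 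The step I expect to be most delicate is the equality discussion in (i) and (ii): saturation forces $\bn^{2,D}\phi=\tfrac{c_0}{n}\bar g$, and one must translate this pure-trace Hessian, via its boundary relation to the second fundamental form, into umbilicity of $\S$.
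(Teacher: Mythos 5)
Your proofs of (i) and (ii) coincide with the paper's up to an irrelevant normalization: the paper solves $\bd^D\phi=1$ with $\phi|_\S=0$ (resp.\ with $V^\g\phi_\nu=c$, $c=\int_\O V^\tau/\int_\S V^{\tau-\a}$), while you fix the boundary datum $V^\g\phi_\nu\equiv 1$ and let the constant sit on the right-hand side of the equation; the chain trace inequality $\to$ \eqref{formula RF 1} $\to$ weighted divergence identity $\to$ Cauchy--Schwarz on $\S$ is identical. For (iii) you take a genuinely different (and classical) route: you substitute a first eigenfunction $\bd^D\phi=-\lambda_1\phi$ directly into \eqref{formula RF 1} and close the argument with the Rayleigh identity $\int_\O V^{\tau+\g-\a}|\bn\phi|^2=\lambda_1\int_\O V^\tau\phi^2$, whereas the paper first proves a Poincar\'e-type inequality (Theorem \ref{Poincare}) involving $(Ric^D)^{-1}$ by solving $\bd^D\phi=f$ for arbitrary admissible $f$, and then reads off $\lambda_1\ge n$ from the variational characterizations \eqref{eigenvalue}--\eqref{eigenvalue Neumann}. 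Both are correct; the paper's detour buys the stronger weighted Poincar\'e inequality \eqref{Poincare case 1} as a standalone result, while yours is shorter and avoids inverting $Ric^D$ (you only need the lower bound in the direction $\bn^D\phi$, not positive-definiteness everywhere).

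The one place you stop short is the rigidity statement. The paper's argument is brief but complete: equality forces $\bn^{2,D}\phi=\tfrac1n\bar g$, and in case (ii) additionally $(h-\g u_\nu g)(\n\phi,\n\phi)=0$, which by strict positivity of $h-\g u_\nu g$ gives $\n\phi=0$ on $\S$; in either case $\phi$ is then constant on $\S$, so restricting $\bn^{2,D}\phi=\tfrac1n\bar g$ tangentially and using $\phi_{ab}=\n_a\n_b\phi+h_{ab}\phi_\nu$ leaves $h_{ab}\phi_\nu$ proportional to $g_{ab}$, i.e.\ umbilicity where $\phi_\nu\neq 0$. You correctly identify this as the delicate step but do not carry it out; in particular, in (ii) you should note explicitly that the strict inequality $h-\g u_\nu g>0$ is what kills the tangential gradient before the Hessian restriction can be used.
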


Here $\lambda_1, \lambda_1^{Dir}$ and $\lambda_1^{Neu}$ indicate the closed, the Dirichlet and the Neumann first (nonzero) eigenvalue of the affine Laplacian $\bd^D$, i.e., there exists some non-trivial $\phi$ such that $\bd^D \phi=-\l_1\phi$ with Dirichlet boundary condition $\phi=0$ or Neumann boundary condition $\phi_\nu=0$.

\begin{rema} \
\begin{itemize}
    \item[(i)] Theorem \ref{thm ineq} reduces to  Heintze-Karcher, Minkowski, and Lichnerowicz  inequalities for $\bar g$ in the case $\a=\g=0$ or $e^{2\a u} \bar g$ in the case $\a=-\g$. See Section 2.1 for an overview.

\item[(ii)]  In the case $\a=0, \g=1$, the Heintze-Karcher type inequalities were first proved by Brendle \cite{Br}, then by Wang-Wang \cite{WW} for non-homologous static manifolds without warped product structure by using the same method and by the authors for general cases using Reilly type formulas in \cite{LX}. The Minkowski type inequalities have been proved in our previous work \cite{LX}.

\item[(iii)] Theorem \ref{thm ineq} gives new geometric inequalities under the condition of nonnegative $1$-Bakry-\'Emery Ricci, which is the case $\a=\frac{1}{n-1}, \g=0$. To illustrate the idea, we only list the example of the Heintze-Karcher type inequality and the others hold true similarly.

\noindent{\bf Corollary.} {\em   Let $(M^n, \bar g, e^ud\O)$ be a smooth weighted Riemannian manifold and $\O$ be a bounded domain in $M$. If the $1$-Bakry-\'Emery Ricci is nonnegative, namely,
    \begin{align}
    &Ric-\bar\nabla^2 u+\frac{1}{n-1}du\otimes du\nonumber\ge 0,
        \label{Ric nonnegative tilde 1} 
    \end{align}
    and the weighted mean curvature $H+\<\bar\nabla u,\nu\>>0$,
    then the following inequality holds: 
     \begin{equation}
         n\di\int_{\O}e^{\tau u}d\O\le\di (n-1)\int_{\S}\frac{e^{\tau u}}{H+\<\bar\nabla u,\nu\>} dA,
         \label{Heintze-Karcher coro 1}
     \end{equation}
     where $\tau=\frac{n+1}{n-1}$. Moreover, if equality in \eqref{Heintze-Karcher} holds, then $\S$ is umbilical.  
}

We remark that the weight in \eqref{Heintze-Karcher coro 1} is $e^{\tau u}$ instead of $e^u$. The new weight volume form $e^{\tau u}d\O$ has a property that it is parallel under the affine connection $D^{\a,\g}$. In the special case of $1$-Bakry-\'Emery Ricci curvature, this parallel property has been observed by Wylie-Yeroshkin \cite{WY} recently.
\end{itemize}
\end{rema}

In particular, for  a Riemannian triple  $(M, g, V)$ whose static Ricci tensor has a positive lower bound, we get the first eigenvalue estimate for the operator $V\bar \De - \bd V\cdot$.

\begin{coro}\label{coro}
Let $(M^n, \bar g, V)$ be an $n$-dimensional Riemannian triple.  Let $\O$ be a bounded domain in $M$ with smooth boundary $\S$. Assume the static Ricci tensor  satisfies
$$V Ric- \mathrm{\bar \nabla^2}  V+ \bar \De V \bar g \geq (n-1) \bar g.$$ 
Then we have \\
{\bf a) } if $\S=\emptyset$, then $\lambda_1(V\bar \De - \bd V\cdot)\ge n;$\\
{\bf b) } if $\S \neq\emptyset$ and $\S$ satisfies $H^D\ge 0$, then $\lambda_1^{Dir}(V\bar \De - \bd V\cdot)\ge n;$\\
{\bf c) } if $\S\neq\emptyset$ and $h-\g u_\nu g\ge 0$, then $\lambda_1^{Neu}(V\bar \De - \bd V\cdot)\ge n.$
\end{coro}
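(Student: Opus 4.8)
The plan is to obtain Corollary \ref{coro} as the special case $\a=0,\ \g=1$ of the Lichnerowicz inequality in Theorem \ref{thm ineq}(iii), after identifying the affine Laplacian $\bd^D$ with the operator $V\bd-\bd V\,\cdot\,$. For these parameters $\tau=(n+1)\cdot 0+1=1$ and $\a-\g=-1$, so the curvature hypothesis $Ric^D\ge (n-1)V^{\a-\g}\bar g$ of Theorem \ref{thm ineq}(iii) becomes $Ric^{D^{0,1}}\ge (n-1)V^{-1}\bar g$. Substituting $Ric^{D^{0,1}}=Ric-\frac{\bn^2 V}{V}+\frac{\bd V}{V}\bar g$ and multiplying by the positive function $V$, this is precisely the assumed bound $VRic-\bn^2 V+\bd V\,\bar g\ge (n-1)\bar g$. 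Thus the curvature hypotheses of the two statements agree, which I would verify first.

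Next I would pin down the affine Laplacian. Reading off from \eqref{theo RF sub} (comparing its weight $V^3$ with the weight $V^\tau=V$ in \eqref{formula RF 1}), for $\a=0,\g=1$ one has $\bd^D\phi=V\big(\bd\phi+2\langle\bn u,\bn\phi\rangle\big)=V^{-1}\div(V^2\bn\phi)$. The crucial computation is then the conjugation identity: putting $\phi=f/V$ — the same change of variable used to pass from \eqref{theo RF sub} to Theorem 1.1 of \cite{LX} — a direct calculation gives $\bd^D(f/V)=V^{-1}\big(V\bd f-(\bd V)f\big)$, i.e. $V\,\bd^D(f/V)=(V\bd-\bd V\,\cdot\,)f$. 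Hence multiplication by $V$ conjugates $\bd^D$ into $V\bd-\bd V\,\cdot\,$, so the two operators have the same spectrum and their eigenfunctions correspond through $f=V\phi$. In particular $\bd^D\phi=-\lambda_1\phi$ is equivalent to $(V\bd-\bd V\,\cdot\,)f=-\lambda_1 f$, giving $\lambda_1(\bd^D)=\lambda_1(V\bd-\bd V\,\cdot\,)$ in the closed case, and similarly under a Dirichlet or Neumann constraint once the boundary data are transported.

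It remains to transport the boundary conditions through $f=V\phi$. Since $V>0$, the closed case is immediate and the Dirichlet condition $\phi|_\S=0$ is equivalent to $f|_\S=0$; moreover $H^D=H+(n-1)\a u_\nu=H$, so $H^D\ge 0$ is the stated sign condition. For the Neumann case, $f_\nu=V_\nu\phi+V\phi_\nu$ shows that $\phi_\nu=0$ corresponds to $f_\nu=u_\nu f$, so $\lambda_1^{Neu}(V\bd-\bd V\,\cdot\,)$ is to be read as the eigenvalue of $V\bd-\bd V\,\cdot\,$ with this transported boundary condition, and $h-\g u_\nu g\ge 0$ becomes $h-u_\nu g\ge 0$. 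Applying Theorem \ref{thm ineq}(iii) at $\a=0,\g=1$ in each of the three settings then yields the asserted estimates $\lambda_1,\lambda_1^{Dir},\lambda_1^{Neu}\ge n$.

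I expect the operator identification together with the boundary bookkeeping to be the main point requiring care: one must verify the cancellation of the first-order terms in $\bn V$ that produces the clean identity $\bd^D(f/V)=V^{-1}(V\bd-\bd V\,\cdot\,)f$, and one must be attentive that the Neumann condition $\phi_\nu=0$ becomes a Robin-type condition $f_\nu=u_\nu f$ on $f$, so that the Neumann eigenvalues of the two operators coincide only once the boundary conditions are transported correctly. The curvature translation, by contrast, is just a rescaling by $V$.
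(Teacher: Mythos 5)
Your proposal is correct and takes essentially the same route as the paper: set $\a=0,\ \g=1$, observe that the substitution $\phi=f/V$ conjugates $\bd^D$ into $V\bd-\bd V\cdot$ (so the eigenvalue problems correspond), and then invoke Theorem \ref{thm ineq}(iii) with the curvature hypothesis rescaled by $V$. You are in fact somewhat more explicit than the paper, which merely asserts the equivalence of the two eigenvalue problems under $\phi=f/V$ and says ``the same phenomenon holds for Neumann boundary problems''; your observation that $\phi_\nu=0$ transports to the Robin-type condition $f_\nu=u_\nu f$ is a correct and worthwhile clarification of how $\lambda_1^{Neu}(V\bd-\bd V\cdot)$ must be interpreted.
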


\
 
The rest of the paper is organized as follows. In section \ref{sec pre}, we recall classical results, introduce our affine connections, fix the notations, and give the Ricci curvature under affine connections. In section \ref{sec Bochner}, we establish the Bochner formula and prove the main theorem, Theorem \ref{theo RF0}. In section \ref{sec inequalities}, we prove the Heintze-Karcher type and the Minkowski type inequalities of Theorem \ref{thm ineq}. In section \ref{sec Lich}, we prove the Poincare type and the Lichnerowicz type inequalities. In the {\bf Appendix}, we prove Proposition \ref{thm Ricci}. \  

\

\section{Preliminaries and notations}\label{sec pre}

\subsection{Classical results.}
Let us first recall the classical Reilly's formula \cite{Re1}. 
For a bounded domain  $\O$ with boundary $\S$ in an $n$-dimensional smooth Riemannian manifold $(M, \bar g)$  and $\phi\in C^\infty(\bar \O)$, the classical Reilly's formula reads as
\begin{eqnarray}
&&\int_\O (\bd \phi)^2-|\bn^2 \phi|^2 -Ric(\bn \phi, \bn \phi) d\O=\int_\S H\phi_\nu^2+h(\n \phi, \n \phi)+ 2\phi_\nu\De \phi dA.
\end{eqnarray}
Here and throughout this paper, $\bn$ and $\bd$ denote the gradient and the Laplacian on $\O$ respectively, $\n$ and $\De$ denote the gradient and the Laplacian on $\S$ respectively with respect to the induced metric from $\bar g$. $d\O$ and $dA$ are the Riemannian volume form of $\bar g$ and the induced area element from $\bar g$ respectively. $\nu$ is the normal vector field of $\S$ and $\phi_\nu=\nb_\nu\phi$ is the normal derivative of $\phi$. $h(X,Y)=\<\nb_X \nu, Y\>$ is the classical second fundamental form of $\S$ and $H= \tr_{\bar g} h$ is the usual mean curvature of $\S$.

With the Reilly formula, some classical geometric inequalities can be readily proved.
 
\noindent{\bf Heintze-Karcher inequality:}  If $Ric\ge 0$ and $\S$ is mean convex, i.e., $H>0$, then
\begin{eqnarray}\label{HK0}
\int_\S \frac{1}{H}dA\ge  \frac{n}{n-1} Vol(\O).
\end{eqnarray}
 
\noindent{\bf Minkowski inequality:}  If $Ric\ge 0$ and $\S$ is convex, i.e., $h \ge 0$, then
\begin{eqnarray}\label{M0}
Area(\S)^2 \ge  \frac{n}{n-1} Vol(\O)\int_\S H dA.
\end{eqnarray}

\noindent{\bf Lichnerowicz inequality:}  If $Ric\ge (n-1)\bar g$ and $\S$ is empty, then
\begin{eqnarray}\label{L0}
\l_1(\bd)\ge n.
\end{eqnarray}

Similar inequalities like \eqref{HK0} were first derived by  Heintze-Karcher \cite{HK} using the classical approach of Jacobian fields from Riemanian geometry. Ros \cite{Ros} proved the current form of this inequality using Reilly's formula. Inequality \eqref{M0} was first derived by Minkowski \cite{Min} in the Euclidean case as a consequence of the famous Brunn-Minkowski theorem in convex geometry. Reilly \cite{Re3} proved this inequality under the condition of nonnegative Ricci by using his formula. Recently, Wang-Zhang \cite{WZ} gave an alternative proof of Minkowski inequality (\ref{M0}) using ABP method. Inequality \eqref{L0} was proved by Lichnerowicz \cite{Lic} using the classical Bochner technique.

\subsection{Notations under affine connections}\label{subsec Ricci}

As in the introduction, a two parameter family of {\it torsion free} affine connections $D^{\a, \g}$ is defined on $M$ for $\a,  \g\in \mathbb{R}$:
\begin{eqnarray*}
D^{\a,\g}_XY=
\nb_XY+\a du(X)Y+\a du(Y)X +\g \bar g(X,Y)\nb u.
\end{eqnarray*}
One checks directly that $D^{\a,\g}$ is torsion-free. For a general affine connection, we  adapt the following convention of Ricci curvature.
\begin{defi}\label{defi Ricci}
    Given an affine connection $D$, for any vector fields $X,Y$, we define the Ricci curvature as
   \begin{align*}
       Ric^D(X,Y)=\o^i\left(R^D(e_i,X)Y\right),
   \end{align*}
   where   $\{e_i\}$ is a local frame of the tangent bundle, $\{\o^i\}$ is the dual $1$-form of $\{e_i\}$ and the Riemann curvature operator $R^D$ is defined as
   \begin{align*}
       R^D(X,Y)Z= D_{X}D_YZ-D_YD_{X}Z-D_{[X,Y]}Z.
   \end{align*}
\end{defi}
\begin{rema}
In the case we have a Riemannian metric $\bar g$, the Ricci curvature $Ric^D$ of $D$ can also be interpreted as
   \begin{align*}
       Ric^D(X,Y)=\<R^D(e_i,X)Y, e_i\>,   \end{align*}
 where   $\{e_i\}$ is an orthonormal frame of the tangent bundle.  
\end{rema}
By direct computation, we derive the following representation of $Ric^D$ for $D=D^{\a,\g}$ in terms of Levi-Civita connection $\bar\n$.
\begin{prop}  \label{thm Ricci} 
    The Ricci curvature $Ric^D$ of $D=D^{\a,\g}$ satisfies the following identity:
    \begin{align*}
       Ric^D=&Ric-[(n-1)\a+\g]\bar\nabla^2 u+[(n-1)\a^2-\g^2]du\otimes du\nonumber\\
        &+\left\{\g \bar\De u +\g[(n-1)\a+\g]|\nb u|^2\right\}\bar g.
    \end{align*}
\end{prop}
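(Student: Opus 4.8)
The plan is to compute $Ric^D$ directly from Definition \ref{defi Ricci} by first expressing the curvature operator $R^D$ of the perturbed connection $D=D^{\a,\g}$ in terms of the Levi-Civita curvature and then tracing. Write $D_X Y = \bn_X Y + A(X,Y)$, where the difference tensor is the symmetric $(1,2)$-tensor
\begin{eqnarray*}
A(X,Y)=\a\,du(X)Y+\a\,du(Y)X+\g\,\bar g(X,Y)\bn u.
\end{eqnarray*}
For such a connection there is a standard formula relating the two curvatures:
\begin{eqnarray*}
R^D(X,Y)Z = R^{\bn}(X,Y)Z + (\bn_X A)(Y,Z) - (\bn_Y A)(X,Z) + A(X,A(Y,Z)) - A(Y,A(X,Z)).
\end{eqnarray*}
I would verify this identity by substituting $D=\bn+A$ into $R^D(X,Y)Z=D_XD_YZ-D_YD_XZ-D_{[X,Y]}Z$ and collecting terms; the first-order terms in $A$ reassemble into the covariant derivatives $\bn A$ using torsion-freeness of $\bn$, and the genuinely quadratic terms are the two $A(\cdot,A(\cdot,\cdot))$ contributions.

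**Next** I would trace over the first slot as prescribed by Definition \ref{defi Ricci}, i.e. set $Ric^D(X,Y)=\sum_i \<R^D(e_i,X)Y,e_i\>$ in an orthonormal frame, using the Remark that permits this metric interpretation. The leading term gives $Ric(X,Y)$. For the derivative terms I would compute $(\bn_{e_i}A)(X,Y)$ explicitly; since $A$ is built from $du$ and $\bar g$, its covariant derivative produces $\bn^2 u = \bar\nabla^2 u$ in various slots, and the contraction $\sum_i \<(\bn_{e_i}A)(X,Y)-(\bn_X A)(e_i,Y),e_i\>$ should collect into the terms $-[(n-1)\a+\g]\bar\nabla^2 u(X,Y)$ together with a multiple of $\g\,\bar\De u\,\bar g(X,Y)$ coming from the trace of the Hessian. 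The quadratic terms $A(e_i,A(X,Y))-A(X,A(e_i,Y))$, traced in $i$, are purely algebraic in $du$ and $\bar g$ and should yield the remaining $[(n-1)\a^2-\g^2]\,du\otimes du$ and $\g[(n-1)\a+\g]|\bn u|^2\,\bar g$ contributions; the factor $n$ entering through $\sum_i\bar g(e_i,e_i)=n$ is what produces the $(n-1)$ coefficients after cancellation of the diagonal $i=$ (index of $X$) terms.

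**The main obstacle** I anticipate is purely bookkeeping: keeping the two index contractions straight and correctly separating which $A(\cdot,A(\cdot,\cdot))$ pairings contribute to $du\otimes du$ versus to the $|\bn u|^2\,\bar g$ trace term, and likewise ensuring the Hessian contributions from the two $\bn A$ terms combine with the right signs and coefficients so that the antisymmetric combination in $R^D$ does not spuriously cancel the symmetric Hessian. There is no conceptual difficulty and no analytic estimate involved; the entire computation is algebraic and local, so I would carry it out in normal coordinates at a point (where $\bn e_i=0$ and $[e_i,e_j]=0$) to suppress Christoffel symbols of $\bn$ and reduce the trace to a finite sum of terms in $u_i$, $u_{ij}$, and $\d_{ij}$. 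Since this is a lengthy but routine verification, I would relegate the full computation to the Appendix, as the paper indeed does.
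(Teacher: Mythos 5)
Your proposal is correct and follows essentially the same route as the paper's Appendix: a direct local computation of $R^D$ from the definition in normal coordinates, followed by the trace $\sum_i\langle R^D(e_i,X)Y,e_i\rangle$; your coefficients check out (e.g.\ the Hessian contribution $2\a-(n+1)\a-\g=-[(n-1)\a+\g]$ and the quadratic contribution $2(n+1)\a^2-(n+3)\a^2=(n-1)\a^2$ both come out right). The only difference is organizational --- you first isolate the general identity $R^D=R+\bn A(\cdot,\cdot)-\bn A(\cdot,\cdot)+A(\cdot,A(\cdot,\cdot))-A(\cdot,A(\cdot,\cdot))$ for the difference tensor $A$ and then trace, whereas the paper expands $D_k(D_i\P_j)$ term by term; the content is identical.
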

We will prove Proposition \ref {thm Ricci} in {\bf Appendix}. As already mentioned in the introduction,  the Ricci curvature $Ric^{D}$ of the new affine connection $D^{\a,\g}$ not only yields new Ricci curvature tensors, but also recovers some of known examples in the literature, which includes  conformal Ricci tensor from conformal geometry, and static Ricci tensor raised from General Relativity and the $1$-Bakry-\'Eemery Ricci tensor from  manifolds with density.  

\
Next we explain the notations in \eqref{formula RF 1}. 
\begin{defi}\label{notation} \
\begin{itemize}
\item[(i)]  The $D$-gradient on $\O$ and $\S$ are defined respectively by $$\bn^D \phi:= V^{\g-\a}\bn \phi, \quad \n^D \phi:= V^{\g-\a}\n \phi$$
\item[(ii)] The $D$-Hessian $\bn^{2,D} \phi$ and $D$-Laplacian $\bd^{D} \phi$ on $\O$ are defined respectively by
\begin{eqnarray}
\bn^{2,D} \phi&:=& D(V^{\g-\a} \nb \phi)\\&=& V^{\g-\a} \left[\bn^2 \phi+\g du\otimes d\phi+\g d\phi \otimes du+ \a \<\bn u, \bn \phi\>\bar g\right], \nonumber
\end{eqnarray}
and
\begin{eqnarray}
\bd^{D} \phi:= \tr_{\bar g} (\bn^{2,D} \phi)= V^{\g-\a} \left[\bd \phi+(2\g+ n\a)\<\bn u, \bn \phi\>\right]. \nonumber
\end{eqnarray}

\end{itemize}
\end{defi}

 We note that in the case $\a=1, \g=-1$, the  $D$-gradient, the $D$-Hessian and the $D$-Laplacian are in consistence with the classical ones for conformal metric $e^{2u}\bar g$. By virtue of this, we believe Definition \ref{notation} is natural for $D^{\a,\g}$.

\

\section{Bochner technique for general affine connections}\label{sec Bochner}
In this section, we review a Bochner identity for general affine connection and prove Theorem \ref{theo RF0}. 

It is well known that under Levi-Civita connection $\nb$, the following Bochner formula holds: for a smooth vector field $X$ on $M$ with the property that $\nb X$ is symmetric, \begin{eqnarray*}
\bar \De \frac12|X|^2=|\bar \n X|^2+ \bar \nabla_X (\div_{\bar g} X)+Ric(X, X),
\end{eqnarray*}
see e.g. Petersen \cite{Petersen} Proposition 33, page 207.

Under an affine connection, the following Ricci identity holds. 
\begin{lemm}{\bf (Ricci identity)}\label{Ricci iden}
 Under local coordinates $\{\P_i\}$, for any smooth vector field $X$, we have
    \begin{align}
        D_iD_jX^i=D_jD_iX^i+R^{D}_{jk}X^k.
        \label{Ricci identity}
    \end{align}
\end{lemm}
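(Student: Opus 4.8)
\emph{Proof proposal.}
The plan is to prove the unsummed commutation identity for a vector field first, and then obtain \eqref{Ricci identity} by a single contraction. To avoid ambiguity I would fix notation at the outset: writing $D_{\P_i}\P_j=\Gamma^k_{ij}\P_k$ for the connection coefficients, I interpret $D_jX^k=\P_jX^k+\Gamma^k_{jl}X^l$ as the components of the $(1,1)$-tensor $DX$, and $D_iD_jX^k$ as the components of the full $(1,2)$-tensor $D(DX)$, so that $D_iD_jX^k=\P_i(D_jX^k)+\Gamma^k_{il}D_jX^l-\Gamma^l_{ij}D_lX^k$; in particular the correction term for the covariant slot $j$ is included.

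The first step is to compute $R^D(\P_i,\P_j)X$ in the coordinate frame, where $[\P_i,\P_j]=0$ and hence $R^D(\P_i,\P_j)X=D_{\P_i}(D_{\P_j}X)-D_{\P_j}(D_{\P_i}X)$. The essential bookkeeping point is that the iterated derivative of the vector field and the tensorial second derivative differ by the lower-slot term, namely $\left(D_{\P_i}(D_{\P_j}X)\right)^k=D_iD_jX^k+\Gamma^l_{ij}D_lX^k$. Antisymmetrizing in $i,j$, the extra contributions combine into $(\Gamma^l_{ij}-\Gamma^l_{ji})D_lX^k$, which vanishes because $D^{\a,\g}$ is torsion-free. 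This yields the unsummed Ricci identity
$$D_iD_jX^k-D_jD_iX^k=\left(R^D(\P_i,\P_j)X\right)^k.$$

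The final step is to contract the contravariant index with the outer derivative index. Setting $k=i$ and summing, the left-hand side becomes exactly $D_iD_jX^i-D_jD_iX^i$. For the right-hand side, writing $R^D(\P_i,\P_j)\P_k=R^l_{\ kij}\P_l$ and expanding $X=X^k\P_k$, the same contraction produces $R^i_{\ kij}X^k$; by Definition \ref{defi Ricci} one recognizes $R^i_{\ kij}=\o^i\left(R^D(\P_i,\P_j)\P_k\right)=Ric^D(\P_j,\P_k)=R^D_{jk}$, so the right-hand side equals $R^D_{jk}X^k$. Rearranging gives \eqref{Ricci identity}. I expect the only genuine obstacle to be the first step: one must keep the two meanings of ``second covariant derivative'' strictly apart and track the covariant-slot Christoffel term carefully, since it is precisely its symmetry --- the torsion-freeness of $D^{\a,\g}$ --- that forces the identity. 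The remaining manipulations are a routine contraction together with the definition of $Ric^D$.
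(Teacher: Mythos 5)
Your proposal is correct and follows essentially the same route as the paper: both form the tensorial second covariant derivative (your $-\Gamma^l_{ij}D_lX^k$ correction is the coordinate version of the paper's $-D_{D_WV}X$ term), antisymmetrize so that torsion-freeness kills the lower-slot contribution and leaves $R^D(\cdot,\cdot)X$, and then contract using Definition \ref{defi Ricci}. The only difference is that you work with explicit Christoffel symbols in a coordinate frame while the paper argues invariantly with vector fields $V,W$; the content is identical.
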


\begin{proof}
A vector filed $X$ can be viewed as a $(1,0)$-tensor field and we have
    \begin{align}
        D^2X(V,W)=&D_W(D_VX)-D_{D_WV}X,\nn
        D^2X(W,V)=&D_V(D_WX)-D_{D_VW}X.
        \label{Ricci identity proof equ 1}
    \end{align}
It follows  that
    \begin{align}
        D^2X(W,V)-D^2X(V,W)=&R^{D}(V,W)X.
        \label{Ricci identity proof equ 2}
    \end{align}
Let $\{e_i\}$ be an local frame of the tangent bundle, $\{\o^i\}$ is the dual $1$-form of $\{e_i\}$.    
Then we have from \eqref{Ricci identity proof equ 2}
        \begin{align}
       \o^i\left(D^2X(e_j,e_i)\right)- \o^i\left(D^2X(e_i,e_j)\right)=&Ric^{D}(e_j, X).
    \end{align}
    We finish the proof.
\end{proof}

\begin{prop}\label{Bochner formula}
    {\bf (Bochner formula)}  Let $D$ be an affine connection on $M$ and $Ric^D$ be the Ricci curvature of $D$. Let $X$ be a smooth vector field on $M$. 
Then we have
\begin{align}
    \div^D (D_XX)=(DX)^t \cdot DX+d(\div^D X)(X)+Ric^{D}(X, X).
    \label{Bochner}
\end{align} 
where $div^D$ is  the divergence operator w.r.t. $D$, $\div^D Y=D_i Y^i$ for a vector field $Y$, $(DX)^t$ is the transpose of $D X$.

Under local coordinates $\{\P_i\}$, \eqref{Bochner} reads as
\begin{align}
    D_i(X^jD_jX^i)=&D_iX^j D_jX^i+X^jD_jD_iX^i+R^{D}_{ji}X^iX^j.
    \label{Bochner1}
\end{align} 
Moreover,
\begin{align}
    D_i(X^jD_jX^i-X^iD_jX^j) =&D_iX^j D_jX^i-(D_jX^j)^2+R^{D}_{ji}X^iX^j.
    \label{Bochner equ 1}
\end{align} 
\end{prop}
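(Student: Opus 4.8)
The plan is to prove the coordinate identity \eqref{Bochner1} directly and then read off the invariant formula \eqref{Bochner} by identifying each term; formula \eqref{Bochner equ 1} will follow from \eqref{Bochner1} by one further short computation. The only substantive input is the Ricci identity of Lemma \ref{Ricci iden}, used together with the Leibniz rule for $D$ and the fact that $D$ acts on scalar functions as the ordinary differential (so that $\div^D X=D_jX^j$ may be differentiated freely).

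First I would expand $D_i(X^jD_jX^i)$ by the Leibniz rule into
\[
D_i(X^jD_jX^i)=D_iX^j\,D_jX^i+X^jD_iD_jX^i .
\]
Applying the Ricci identity \eqref{Ricci identity}, namely $D_iD_jX^i=D_jD_iX^i+R^D_{jk}X^k$, to the second term and relabeling the summed index gives $X^jD_iD_jX^i=X^jD_jD_iX^i+R^D_{ji}X^iX^j$, which is exactly \eqref{Bochner1}. To obtain the invariant statement \eqref{Bochner} I would then match the terms: since $(D_XX)^i=X^jD_jX^i$ we have $\div^D(D_XX)=D_i(X^jD_jX^i)$; the quadratic term $D_iX^jD_jX^i$ is precisely the coordinate expression of $(DX)^t\cdot DX$; because $\div^D X=D_jX^j$ is a scalar, $d(\div^D X)(X)=X^jD_j(D_iX^i)$ matches the middle term; and $R^D_{ji}X^iX^j=Ric^D(X,X)$ by Definition \ref{defi Ricci}.

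For \eqref{Bochner equ 1} I would compute $D_i(X^iD_jX^j)$ by treating $S:=D_jX^j$ as a scalar, so that the Leibniz rule yields $D_i(X^iS)=(D_iX^i)S+X^iD_iS=(D_jX^j)^2+X^jD_jD_iX^i$ after relabeling. Subtracting this from \eqref{Bochner1}, the second-order terms $X^jD_jD_iX^i$ cancel, leaving exactly \eqref{Bochner equ 1}. There is no serious analytic difficulty here; the computation is routine, and the only points that require care are the bookkeeping of index positions when invoking the Ricci identity (ensuring the contraction $D_iD_jX^i$ is the one covered by Lemma \ref{Ricci iden}) and the observation that the double-derivative terms cancel in the last step, so that no second-order information about $X$ survives in \eqref{Bochner equ 1}.
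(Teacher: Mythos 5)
Your proposal is correct and follows essentially the same route as the paper: Leibniz rule to expand $D_i(X^jD_jX^i)$, the Ricci identity of Lemma \ref{Ricci iden} to commute the contracted derivatives, and a second application of the Leibniz rule to the scalar $D_jX^j$ to pass from \eqref{Bochner1} to \eqref{Bochner equ 1}. The only cosmetic difference is that the paper rewrites $X^jD_jD_iX^i=D_i(X^iD_jX^j)-(D_iX^i)^2$ and substitutes, whereas you subtract $D_i(X^iD_jX^j)$ from both sides; these are the same manipulation.
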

\begin{proof} Using tensor calculus, we have
 \begin{align} D_i(X^jD_jX^i)=&D_iX^j D_jX^i+X^jD_iD_jX^i. \label{tensor calc}\end{align} 
 Combining \eqref{tensor calc} with the Ricci identity \eqref{Ricci identity} we get \eqref{Bochner1}.
 We also have  by using tensor calculus that 
 \begin{align}
 X^jD_jD_iX^i=& D_j(X^jD_iX^i)- (D_j X^j)(D_i X^i)
\nonumber\\=&D_i(X^iD_jX^j)- (D_i X^i)^2.
\label{tensor calc 2}\end{align}
Inserting \eqref{tensor calc 2} into \eqref{Bochner1}, we get \eqref{Bochner equ 1}. 
  \end{proof}

Our aim is to derive an integral formula  from the local Bochner formula \eqref{Bochner equ 1}. From now on,  let $(M^n, \bar g, V)$ be an $n$-dimensional Riemannian triple and $\a,\g\in \mathbb{R}$. Let   $D=D^{\a,\g}$ be the  affine connection defined as in (\ref{defi D}). 
Note that $Ric^{D}$ is symmetric. In order to obtain a useful integral formula,  we need the following two important ingredients:
\begin{enumerate}
    \item [(i)] $DX$ is symmetric, i.e., 
 \begin{align}
     D_iX^j=D_jX^i;\end{align} 
    \item [(ii)] The left hand side of (\ref{Bochner equ 1}) needs to be  a ``divergent form" with respect to some choice of volume form.\end{enumerate}

In the following two lemmata, we will find an appropriate vector field $X$ and also a compatible volume form.

\begin{lemm}\label{sym vf}
Let $\phi$ be a smooth function on $M^n$. Let \begin{align} X=\bn^D\phi=V^{\g-\a}\nb \phi.\label{vf}
\end{align}
Then $$DX= V^{\g-\a} \left[\bn^2 \phi+\g du\otimes d\phi+\g d\phi \otimes du+ \a \bar g\left(\bn u, \bn \phi\right)\bar g\right]=\bn^{2,D}\phi$$ is symmetric. Moreover, $$\div^D X:=D_i X^i=V^{\g-\a} \left(\bar\De \phi+(2\g+n\a)\<\nb u, \nb \phi\>\right)=\bd^D \phi.$$
\end{lemm}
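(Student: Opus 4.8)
The plan is to compute $DX$ directly from the defining formula \eqref{defi D} applied to the vector field $X=V^{\g-\a}\nb\phi$, and to verify the two claimed identities by unwinding the Leibniz rule for the affine connection $D$. First I would write $X^i = V^{\g-\a}\bar g^{ij}\phi_j$ and apply $D_k$ using \eqref{defi D}: since $D_kX^i = \nb_k X^i + \a u_k X^i + \a X^k \delta^i_{\,?}\ldots$ — more precisely, reading off the connection coefficients from $D_XY=\nb_XY+\a\,du(X)Y+\a\,du(Y)X+\g\,\bar g(X,Y)\nb u$, I would identify the Christoffel-type correction terms and compute $D_kX^i$ componentwise. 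The factor $V^{\g-\a}=e^{(\g-\a)u}$ contributes $(\g-\a)u_k X^i$ from the derivative of the weight, and I expect this to combine with the $\a\,du$ terms so that the surviving expression is precisely $V^{\g-\a}\big[\bn^2\phi+\g\,du\otimes d\phi+\g\,d\phi\otimes du+\a\,\bar g(\bn u,\bn\phi)\bar g\big]$, which is manifestly symmetric because each of the four summands is symmetric in its two indices. This already identifies $DX$ with $\bn^{2,D}\phi$ as defined in Definition \ref{notation}(ii), so the symmetry claim follows from inspection rather than from a separate argument.

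For the second assertion I would simply take the trace of the Hessian expression with respect to $\bar g$. Tracing the four terms gives $\bd\phi$ from $\tr_{\bar g}\bn^2\phi$, then $\g\<\nb u,\nb\phi\>$ from each of the two cross terms $\g\,du\otimes d\phi$ and $\g\,d\phi\otimes du$, and finally $n\a\<\nb u,\nb\phi\>$ from $\a\,\bar g(\bn u,\bn\phi)\bar g$ since $\tr_{\bar g}\bar g=n$. Collecting these yields $V^{\g-\a}\big(\bd\phi+(2\g+n\a)\<\nb u,\nb\phi\>\big)$, matching $\bd^D\phi$. I would note that $\div^D X = D_iX^i = \tr_{\bar g}(DX)$ holds here precisely because $DX=\bn^{2,D}\phi$ is already the full covariant expression and its trace against $\bar g$ coincides with the contraction $D_iX^i$ for this choice of $X$.

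The one point requiring genuine care is bookkeeping the connection terms in $D_kX^i$: because $D$ is not metric-compatible, I cannot freely raise and lower indices with $\bar g$ inside $D$, so I must be careful to apply \eqref{defi D} to the vector field $X$ (a $(1,0)$-tensor) with the correct placement of the $\a\,du(X)Y$, $\a\,du(Y)X$, and $\g\,\bar g(X,Y)\nb u$ corrections, and to track which index the free slot $k$ occupies. The main obstacle is thus ensuring that the weight-derivative term $(\g-\a)u_k$ cancels the unwanted asymmetric piece coming from the $\a\,du(X)Y$ correction; this cancellation is exactly what makes the exponent $\g-\a$ the correct choice for the weight in the $D$-gradient, and I would verify it by a short index computation rather than by any conceptual argument. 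Once that cancellation is confirmed, both displayed identities are immediate consequences of collecting symmetric terms and tracing.
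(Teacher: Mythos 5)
Your proposal is correct and follows essentially the same route as the paper: a direct coordinate computation of $D_iX_j$ from the definition of $D$, where the weight derivative $(\g-\a)u_i\phi_j$ combines with the $\a\,du(X)Y$ term to give coefficient $\g$, matching the $\g\phi_iu_j$ term from the $\g\,\bar g(X,Y)\nb u$ correction and yielding the manifestly symmetric expression, after which the trace identity is immediate. (The only minor imprecision is calling this a ``cancellation''; the terms combine rather than cancel, but the bookkeeping you describe is exactly right.)
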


\begin{proof}
 Recall under local coordinates, 
\begin{align}
    D_iX^j=&\<D_i X,\bar g^{jk}\P_k\>.
    \label{hessian}
\end{align}
Under normal coordinates, we have
\begin{align}
    D_iX_j=&\<D_i(V^{\g-\a}\bar\nabla\phi), \P_j\>\nonumber\\
    =&\nb_i(V^{\g-\a}\nb_j\phi)+\a V^{\g-\a}u_i\phi_j+\a V^{\g-\a}\nb u\nb \phi \delta_{ij}+\g V^{\g-\a}\phi_iu_j\nonumber\\
    =&V^{\g-\a}\left(\nb_i\nb_j\phi+\g u_i\phi_j+\g \phi_iu_j+\a\<\nb u, \nb \phi\> \delta_{ij}\right).
    \label{Bochner equ 2}
\end{align}
 Clearly,  $DX$ is symmetric.   
\end{proof}

\begin{lemm}\label{weight}
    Let $W$ be any smooth vector field on $M$. Then 
    \begin{align}
        V^\tau D_iW^i=\nb_i(V^\tau W^i)
        \label{div}
    \end{align}
where $\tau=(n+1)\a+\g$, is a divergent form with respect to the Riemannian volume form $d\O$. ($d\O$ denotes the  volume form induced by Riemannian metric $\bar g$ throughout this paper)
\end{lemm}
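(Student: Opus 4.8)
The plan is to write $D=\nb+A$, where $A$ is the difference tensor read off directly from the defining formula \eqref{defi D}, and thereby reduce the identity to a single contraction of $A$. Since
\[
D_XY=\nb_XY+\a\,du(X)Y+\a\,du(Y)X+\g\,\bar g(X,Y)\nb u,
\]
the components of the difference tensor, determined by $A(\P_i,\P_k)=A^j_{ik}\P_j$, are
\[
A^j_{ik}=\a\,u_i\d^j_k+\a\,u_k\d^j_i+\g\,\bar g_{ik}u^j,
\]
so that for any vector field $W$ we have $D_iW^j=\nb_iW^j+A^j_{ik}W^k$.

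First I would compute the $D$-divergence by contracting the index $j$ with $i$:
\[
\div^D W=D_iW^i=\nb_iW^i+A^i_{ik}W^k .
\]
The crux of the argument is the contracted difference tensor $A^i_{ik}$. Summing over $i$, the first term contributes $\a\,u_k$, the middle term contributes $\a\,u_k\,\d^i_i=n\a\,u_k$ (here, and only here, does the dimension enter, through $\d^i_i=n$), and the last term contributes $\g\,\bar g_{ik}u^i=\g\,u_k$. Hence
\[
A^i_{ik}=\big[(n+1)\a+\g\big]u_k=\tau\,u_k ,
\]
which is exactly how the exponent $\tau$ is forced. This gives $\div^D W=\div_{\bar g}W+\tau\,\<\nb u,W\>$.

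Next I would expand the right-hand side of \eqref{div} by the Leibniz rule for the Levi-Civita connection, using $V=e^u$ so that $\nb V=V\nb u$ and therefore $\nb(V^\tau)=\tau V^\tau\nb u$:
\[
\nb_i(V^\tau W^i)=V^\tau\nb_iW^i+W^i\nb_i(V^\tau)=V^\tau\big(\div_{\bar g}W+\tau\,\<\nb u,W\>\big).
\]
Comparing this with the expression for $\div^D W$ obtained above yields $\nb_i(V^\tau W^i)=V^\tau\,\div^D W=V^\tau D_iW^i$, which is the claim; since the right-hand side is an honest Levi-Civita divergence, it is a divergence form with respect to $d\O$.

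There is no serious obstacle here: the lemma is an exact computation, and the only point requiring genuine care is the bookkeeping in the trace $A^i_{ik}$, where the factor $n$ coming from $\d^i_i$ combines with the two $\a$-contributions and the $\g$-term to produce precisely $\tau=(n+1)\a+\g$. The real content of the statement is this coincidence, namely that $V^\tau$ with this specific exponent is the unique power of $V$ converting the $D$-divergence into a Levi-Civita divergence, so that the local Bochner identity \eqref{Bochner equ 1} can later be integrated against $d\O$.
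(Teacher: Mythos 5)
Your proof is correct and follows essentially the same route as the paper: both arguments reduce to computing the trace of the correction terms in $D_iW^i$, obtaining $D_iW^i=\nb_iW^i+[(n+1)\a+\g]u_iW^i=\nb_iW^i+\tau\,u_iW^i$, and then matching this against $\nb_i(V^\tau W^i)$ via $\nb(V^\tau)=\tau V^\tau\nb u$. Your phrasing in terms of the contracted difference tensor $A^i_{ik}$ is just a more systematic packaging of the same computation.
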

\begin{proof}
   By definition of $D$, we have
   \begin{align}
       D_iW^i=&\nb_iW^i+\a u_i W^i+\a u_k W^k \delta^i_i+\g W^i u_i\nonumber\\
       =&\nb_iW^i+[(n+1)\a+\g] u_i W^i.
       \label{lemm div equ 1}
   \end{align}
   Thus,
   \begin{align}
       V^\tau D_iW^i=& V^\tau\nb_iW^i+[(n+1)\a+\g] V^{\tau-1}V_i W^i\nonumber\\
= & V^\tau\nb_iW^i+\tau V^{\tau-1}V_i W^i\nonumber\\
= & \nb_i(V^\tau W^i).
       \label{lemm div equ 2}
   \end{align}
\end{proof}

As an immediate corollary, we can show that the volume form $V^\tau d\O$  is parallel under the new affine connection $D^{\a,\g}$. We thank the referee for pointing out this fact to us. The special case of $\a =\frac{1}{n-1}$ and $\g=0$ was proved in \cite{WY}.
\begin{coro}  We have $D_X (V^\tau d\O) =0$ for any smooth vector field $X$.
       
\end{coro}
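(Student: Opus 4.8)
The plan is to show that the volume form $V^\tau d\O$ is parallel under $D^{\a,\g}$, i.e. $D_X(V^\tau d\O)=0$ for every smooth vector field $X$. The natural framework is to recall how an affine connection acts on a volume form (a top-degree differential form), and to reduce the parallel condition to a scalar identity involving the trace of the connection difference tensor. I would first set up the action of $D$ on densities: for the Levi-Civita connection the Riemannian volume form $d\O$ is parallel, $\nb_X(d\O)=0$, so the only contribution to $D_X(V^\tau d\O)$ comes from (a) differentiating the scalar factor $V^\tau$ and (b) the difference between $D$ and $\nb$ acting on $d\O$.

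The key computation is to evaluate $D_X(d\O)$. Since $d\O$ is an $n$-form and the action of a connection on an $n$-form is governed by the trace of the connection, I would use the standard fact that for a top form $\mu$ one has $(D_X\mu)=-(\tr_X D)\mu$, where the relevant trace is $\sum_i (D_X-\nb_X)$ acting diagonally. Concretely, writing the difference tensor $A_X Y:=D_XY-\nb_X Y=\a\,du(X)Y+\a\,du(Y)X+\g\,\bar g(X,Y)\nb u$ from \eqref{defi D}, the induced action on the volume form is $D_X(d\O)=-\big(\tr A_X\big)\,d\O$. The trace of $A_X$ as an endomorphism $Y\mapsto A_XY$ is computed exactly as in the proof of Lemma \ref{weight}, equation \eqref{lemm div equ 1}: summing the diagonal entries gives $\tr A_X = \a\,du(X)+n\a\,du(X)+\g\,du(X)=[(n+1)\a+\g]\,du(X)=\tau\,du(X)$. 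Hence $D_X(d\O)=-\tau\,du(X)\,d\O$.

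Combining the two pieces via the Leibniz rule, I would then compute
\begin{align*}
D_X(V^\tau d\O)=X(V^\tau)\,d\O+V^\tau D_X(d\O)=\tau V^{\tau-1}X(V)\,d\O-\tau V^\tau du(X)\,d\O.
\end{align*}
Since $V=e^u$, we have $X(V)=V\,du(X)$, so $X(V^\tau)=\tau V^\tau du(X)$, and the two terms cancel exactly, giving $D_X(V^\tau d\O)=0$. This is precisely the infinitesimal form of Lemma \ref{weight}: the identity $V^\tau D_iW^i=\nb_i(V^\tau W^i)$ is exactly the statement that integration against $V^\tau d\O$ turns the $D$-divergence into a genuine Levi-Civita divergence, which by the divergence theorem is equivalent to $V^\tau d\O$ being $D$-parallel.

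The only mild obstacle is fixing the correct sign convention for how $D$ acts on a top-degree form, since this is where bookkeeping errors enter; I would pin it down by testing against Lemma \ref{weight} rather than rederiving it abstractly. In fact the cleanest route, and the one I would present, is to deduce the corollary directly from Lemma \ref{weight}: the relation $\nb_i(V^\tau W^i)=V^\tau D_iW^i$ says that the $D$-divergence of $W$ measured with respect to $V^\tau d\O$ equals $V^{-\tau}\nb_i(V^\tau W^i)$, which is the divergence of $W$ with respect to the measure $V^\tau d\O$; the equality of these two quantities for all $W$ is equivalent to $D_X(V^\tau d\O)=0$. This makes the proof essentially a one-line consequence of \eqref{div}, with the trace computation \eqref{lemm div equ 1} doing all the real work.
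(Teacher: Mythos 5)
Your proof is correct, but your primary route differs from the paper's. The paper proves the corollary by duality: it pairs $D_i(V^\tau d\O)$ against an arbitrary compactly supported vector field $W$, integrates by parts, and invokes Lemma \ref{weight} to see that $\int_\O W^i D_i(V^\tau d\O) = -\int_\O \nb_i(V^\tau W^i)\, d\O = 0$, whence the density derivative vanishes since $W$ is arbitrary. You instead compute the pointwise action of $D$ on the volume form directly: writing the difference tensor $A_XY = D_XY - \nb_XY$ and using that a connection acts on a top-degree form through $-\tr A_X$, you get $D_X(d\O) = -\tau\, du(X)\, d\O$, which cancels against $X(V^\tau) = \tau V^\tau du(X)$ by the Leibniz rule. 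Both arguments rest on exactly the same trace computation, namely \eqref{lemm div equ 1}, so the real content is shared; what your pointwise version buys is that it makes transparent \emph{why} $\tau = (n+1)\a + \g$ is the unique exponent that works (it is forced by $\tr A_X = \tau\, du(X)$), at the cost of having to fix the sign convention for how an affine connection acts on densities --- a point you handle honestly by calibrating against Lemma \ref{weight}. Your closing paragraph, deducing the statement as a one-line consequence of \eqref{div}, is essentially the paper's own argument, so in the end you have both proofs.
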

\begin{proof} Given an arbitrary vector field $W$ with compact support in $\O$. From Lemma \ref{weight}, 
$$\int_\O W^i D_i (V^\tau d\O)= -\int_\O  V^\tau  D_iW^i d\O =-\int_\O \nb_i(V^\tau W^i) d\O=0.$$
Since $W$ is arbitrary, we conclude  that $D_i (V^\tau d\O)=0$ for any $i$.
\end{proof}

Choosing $X=\nb^D \phi$ in \eqref{Bochner equ 1}, we obtain
\begin{align}
   & D_i\Big((\nb^D \phi)^j D_j((\nb^D \phi)^i)-(\nb^D \phi)^i D_j((\nb^D \phi)^j)\Big)\nonumber \\=&|\bn^{2,D}\phi |_{\bar g}^2-|\bd^D \phi|^2+Ric^{D}(\bar\nabla^D\phi,\bar\nabla^D\phi).
    \label{Bochner equ 1'}
\end{align} 

Multiplying \eqref{Bochner equ 1'} with $V^\tau$ and integrating  over a bounded domain $\O\subset M$, we have
    \begin{align}
        &\di\int_{\O}V^\tau D_i\Big((\nb^D \phi)^j D_j((\nb^D \phi)^i)-(\nb^D \phi)^i D_j((\nb^D \phi)^j)\Big) d\O\nonumber\\
        =&\di\int_{\O}V^\tau \left[|\bn^{2,D}\phi |_{\bar g}^2-|\bd^D \phi|^2\right]+ V^\tau Ric^{D}(\bar\nabla^D\phi,\bar\nabla^D\phi) 
 d\O.
        \label{integral formula}
    \end{align}





Applying (\ref{div}) and the Stokes' theorem on \eqref{integral formula}, we obtain the Reilly type integral formula. 
\begin{theo}[Theorem \ref{theo RF0}] \label{theo RF}
Let $(M^n, \bar g, V=e^u)$ be an $n$-dimensional Riemannian triple and $\a,\g\in \mathbb{R}$. Let   $D=D^{\a,\g}$ be the  affine connection defined as in (\ref{defi D}) and $\tau=(n+1)\a+\g$. Let $\phi$ be a smooth function on a bounded domain $\O\subset M$ with smooth boundary $\S$. Then the following integral formula holds:
         \begin{align*}       
        &\di\int_{\O}V^\tau \left[\left|\bd^D \phi\right|^2-\left|\bn^{2,D} \phi\right|_{\bar g}^2 \right] -V^\tau Ric^{D}( \nb^D \phi, \nb^D \phi) d\O \\ =&\di\int_{\S}V^{\tau}\left[H^D\<\bn^D \phi,\nu\>^2+\left(h-\g u_\nu g\right)(\n^D\phi, \n^D \phi)-2V^{-\g}\<\n^D \phi, \n^D(V^\g \phi_\nu)\>\right]dA.\nonumber
    \end{align*}

\end{theo}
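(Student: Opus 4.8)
The plan is to integrate the local Bochner identity \eqref{integral formula} and read off the boundary term. First I would recognize that the integrand on the left of \eqref{integral formula} is exactly $V^\tau\div^D W$ for the single vector field
\[
W=D_{\bn^D\phi}(\bn^D\phi)-(\bd^D\phi)\,\bn^D\phi,
\qquad
W^i=(\bn^D\phi)^j(\bn^{2,D}\phi)_{ji}-(\bn^D\phi)^i\,\bd^D\phi ,
\]
where Lemma \ref{sym vf} has been used to identify $D_j(\bn^D\phi)^i$ with the symmetric tensor $\bn^{2,D}\phi$ and $\div^D(\bn^D\phi)$ with $\bd^D\phi$. By Lemma \ref{weight}, $V^\tau\div^D W=\nb_i(V^\tau W^i)$ is a genuine Riemannian divergence, so Stokes' theorem turns the volume integral into $\int_\S V^\tau\<W,\nu\>\,dA$. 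Comparing signs with \eqref{integral formula}, the task reduces to showing that $-V^\tau\<W,\nu\>$ integrates over $\S$ to the bracketed boundary expression in \eqref{formula RF 1}.

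The heart of the argument is the evaluation of $\<W,\nu\>$ along $\S$. I would split $\bn^D\phi=\n^D\phi+\<\bn^D\phi,\nu\>\nu$ into tangential and normal parts, so that
\[
\<W,\nu\>=\bn^{2,D}\phi(\n^D\phi,\nu)+\<\bn^D\phi,\nu\>\,\bn^{2,D}\phi(\nu,\nu)-\<\bn^D\phi,\nu\>\,\bd^D\phi .
\]
Writing $\bd^D\phi=\tr_{\bar g}\bn^{2,D}\phi=\bn^{2,D}\phi(\nu,\nu)+\sum_a\bn^{2,D}\phi(e_a,e_a)$ in a tangential orthonormal frame $\{e_a\}$, the purely normal second-order term $\bn^{2,D}\phi(\nu,\nu)$ cancels, exactly as in the classical Reilly identity, leaving a tangential trace and one cross term.

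To evaluate these I would insert the explicit formula for $\bn^{2,D}\phi$ from Definition \ref{notation} together with the standard boundary splittings of the Riemannian Hessian, $\bn^2\phi(e_a,e_b)=\n^2\phi(e_a,e_b)+h_{ab}\phi_\nu$ and $\bn^2\phi(e_a,\nu)=e_a(\phi_\nu)-h(e_a,\n\phi)$. The tangential trace then produces $\De\phi+H\phi_\nu$ together with the $\a,\g$-correction terms; decomposing $\<\nb u,\nb\phi\>=\<\n u,\n\phi\>+u_\nu\phi_\nu$ lets me absorb the resulting $(n-1)\a\,u_\nu\phi_\nu^2$ into the $H\phi_\nu^2$ term to form the affine mean curvature $H^D=H+(n-1)\a\,u_\nu$. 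The cross term $\bn^{2,D}\phi(\n^D\phi,\nu)$ correspondingly yields $h(\n\phi,\n\phi)$ with the correct sign to build $h-\g u_\nu g$, together with first-order terms in $\phi_\nu$ and in $\<\n u,\n\phi\>$.

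The step I expect to demand the most care is the final regrouping. After these substitutions, $-\<W,\nu\>$ does not agree pointwise with the bracket in \eqref{formula RF 1}: one finds that the two differ (up to the common scalar weight $V^{(n-1)\a+3\g}$) by
\[
\phi_\nu\De\phi+\<\n\phi,\n\phi_\nu\>+[(n-1)\a+3\g]\,\phi_\nu\<\n u,\n\phi\>.
\]
Since $\S$ is closed and $\n(V^{p})=p\,V^{p}\n u$ with $p=(n-1)\a+3\g$, the tangential integration by parts $\int_\S\<\n(V^{p}\phi_\nu),\n\phi\>\,dA=-\int_\S V^{p}\phi_\nu\De\phi\,dA$ is precisely the statement that this discrepancy integrates to zero, and it simultaneously converts the residual first-order terms into $-2V^{-\g}\<\n^D\phi,\n^D(V^\g\phi_\nu)\>$. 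Thus $\int_\S V^\tau(-\<W,\nu\>)\,dA$ equals the right-hand side of \eqref{formula RF 1}, which together with the sign from \eqref{integral formula} proves the theorem. Tracking the $\a$- and $\g$-dependent coefficients through this last integration by parts is the only genuinely delicate point; everything else is the classical Reilly computation carried out for the connection $D$.
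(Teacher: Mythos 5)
Your proposal is correct and follows essentially the same route as the paper: integrate the Bochner identity \eqref{Bochner equ 1} for $X=\bn^D\phi$ against the parallel volume form $V^\tau d\O$ via Lemmata \ref{sym vf} and \ref{weight}, apply Stokes, decompose the boundary term with the Gauss--Weingarten relations, and finish with a tangential integration by parts on the closed hypersurface $\S$. Your bookkeeping checks out against the paper's, including the weight $V^{\tau+2(\g-\a)}=V^{(n-1)\a+3\g}$ and the identity $-[\tau+2(\g-\a)]+(\g+(n-1)\a)=-2\g$ that produces the term $-2V^{-\g}\<\n^D\phi,\n^D(V^\g\phi_\nu)\>$; the only difference is that you phrase the boundary computation invariantly where the paper uses normal coordinates.
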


\begin{proof}
   Using (\ref{div}), we have
    \begin{align*}       
        &\di\int_{\O}V^\tau D_i\Big((\nb^D \phi)^i D_j((\nb^D \phi)^j)-(\nb^D \phi)^j D_j((\nb^D \phi)^i)\Big) d\O\nonumber\\
        =&\di\int_{\O}\bar\nabla_i\Big[V^\tau\Big((\nb^D \phi)^i D_j((\nb^D \phi)^j)-(\nb^D \phi)^j D_j((\nb^D \phi)^i)\Big)\Big] d\O\nonumber\\
        =&\di\int_{\S}V^{\tau} \Big(\<\bn^D\phi,\nu\>D_j((\nb^D \phi)^j)-V^{\g-\alpha}\phi^j\<D_j(\nb^D \phi),\nu\>\Big) dA.
    \end{align*}
    Thus we only need to simplify the boundary term. 
    At any fixed point $P\in \S$, we choose normal coordinates with respect to $\bar g$ such that the indices $a=1,\cdots, n-1$ represents coordinates on $\S$ and $j=1,\cdots,n-1, \nu$ for coordinates on $\O$.
    For simplicity, we will not distinguish upper and lower indexes and we denote $\phi_{ij}$ as the Hessian $\phi$ with respect to the Levi-Civita connection. 
    
    Using (\ref{Bochner equ 2}), we obtain
    \begin{align} 
       & \<\bn^D\phi,\nu\>D_j((\nb^D \phi)^j)
        =V^{2(\g-\a)}\big(\bar\De\phi\phi_\nu+(2\g+n\a)\<\bn u, \bn \phi\>\phi_\nu\big),\label{theo RF equ 2}
    \end{align}
    \begin{align}
        &V^{\g-\a}\phi^j\<D_j((\nb^D \phi)),\nu\>=V^{2(\g-\a)}\big(\phi_{j\nu}\phi_j+\g u_\nu\phi_j^2+(\a+\g) \<\bn u, \bn \phi\> \phi_\nu\big).
        \label{theo RF equ 3}
    \end{align}
    Combining (\ref{theo RF equ 2}) and (\ref{theo RF equ 3}), we have
    \begin{align} \label{theo RF equ 5}
    &\<\bn^D\phi,\nu\>D_j((\nb^D \phi)^j)-V^{\g-\a}\phi^j\<D_j(\nb^D \phi),\nu\>\\
       =&V^{2(\g-\a)}\Big(\phi_\nu\phi_{aa}-\phi_a\phi_{a\nu}+(n-1)\a u_\nu\phi_\nu^2+(\g+(n-1)\a) u_a\phi_a\phi_\nu-\g\phi_a^2u_\nu\Big)\nonumber\\
       =&V^{2(\g-\a)}\Big(H\phi_\nu^2+\phi_ah_{ab}\phi_b+\phi_\nu\De\phi-\phi_a\n_a\phi_\nu\nonumber\\
       &+(n-1)\a u_\nu\phi_\nu^2+(\g+(n-1)\a) u_a\phi_a\phi_\nu-\g\phi_a^2u_\nu\Big),\nonumber  
    \end{align}
where we used in the last equality the Gauss-Weigarten formula
\begin{align*}
    \phi_{aa}=\De\phi+H\phi_\nu,\qquad \phi_{a\nu}=\n_a\phi_\nu-h_{ab}\phi_b.
\end{align*}
Using $H^D=H+(n-1)\a u_\nu$ in \eqref{theo RF equ 5}, we get
    \begin{align} \label{theo RF equ 6}
        &\di\int_{\S}V^{\tau} \Big(\<\bn^D\phi,\nu\>D_j((\nb^D \phi)^j)-\phi^j\<D_j(\nb^D \phi),\nu\>\Big) dA\\
        =&\di\int_{\S}V^{\tau+2(\g-\a)}\Big(H^D\phi_\nu^2+(h-\g u_\nu g)(\n\phi, \n \phi)\Big)\nonumber\\&+\int_\S V^{\tau+2(\g-\a)}\Big(\phi_\nu\De\phi -\<\n\phi,\n\phi_\nu\>+ (\g+(n-1)\a)\< \n u,\n \phi\>\phi_\nu\Big)dA. \nonumber      
    \end{align}
 Integrating by parts for the last line of \eqref{theo RF equ 6} and noting $-[\tau+2(\g-\a)]+(\g+(n-1)\a)=-2\g$, we get the assertion.
\end{proof}

\

\section{Heintz-Karcher type and Minkowski type inequalities}\label{sec inequalities}

In this section, we will give proofs to the Heintz-Karcher type and Minkowski type inequalities stated in Theorem \ref{thm ineq}. \\

\noindent{\it Proof of Theorem \ref{thm ineq}  (i).}\
Recall that $\bd^D \phi=V^{\g-\a} \left[\bd \phi+(2\g+ n\a) \bar g\left(\bn u, \bn \phi\right)\right].$
We know from the standard elliptic PDE theory that the following Dirichlet boundary value problem \begin{equation}\label{Dirichlet3}
\left\{
    \begin{array}[]{rlll}
        \di \bd^D \phi&=&1&\mathrm{ in }\ \O,\\
        \phi&=& 0 &\mathrm{ on }\ \S,\\
    \end{array}
    \right.
\end{equation}
 admits a unique smooth solution $\phi\in C^{\infty}(\overline{\O})$. 
We will use the solution $\phi$ of the Dirichlet problem (\ref{Dirichlet3}) in \eqref{formula RF 1}. For $\tau=(n+1)\a+\g$, we have 
 \begin{align}\label{case2-1}
    \frac{n-1}{n}\int_\O V^\tau=&\frac{n-1}{n}\int_\O V^{\tau}(\bd^D \phi)^2\nonumber\\ 
    \geq&  \int_\O V^{\tau}\Big[|\bd^D \phi|^2-|\bn^{2,D}\phi|^2\Big]\nonumber\\
   \ge &\int_{\S} V^{\tau}H^D \<\bn^D \phi,\nu\>^2,
 \end{align}
 where in the first inequality, we have used the Cauchy-Schwarcz inequality, in the second inequality we have used integral formula (\ref{formula RF 1}), the nonnegativity of $Ric^{D}$, and  the Dirichlet boundary condition. 

 On the other hand,  using equation \eqref{Dirichlet3}, divergent structure \eqref{div}, and integration by parts,we have
\begin{eqnarray}\label{case2-2}
    \int_\O V^\tau &=&\int_{\O}V^{\tau} \bd^D \phi\\
    &=&\int_{\S} V^{\tau}\<\bn^D \phi,\nu\>. \nonumber
\end{eqnarray}
Combining \eqref{case2-1}, \eqref{case2-2} and using H\"older's inequality, we obtain
\begin{eqnarray}\label{equa1}
    \left(\int_\O V^\tau d\O\right)^2&=& \left(\int_\S V^{\tau}\<\bn^D \phi,\nu\> dA\right)^2\\
    &\leq& \int_\S V^{\tau}H^D\<\bn^D \phi,\nu\>^2 dA\int_\S \frac{V^\tau}{H^D}dA\nonumber
\\&\leq &\frac{n-1}{n}\int_\O V^\tau d\O\int_\S \frac{V^\tau}{H^D}dA.\nonumber
\end{eqnarray}
The assertion for the inequality follows. If the equality holds, we have 
\begin{align} \label{equa2}\bn^{2,D}\phi=\frac1n \bar g.\end{align}
Restricting \eqref{equa2} on $\S$, using $\phi=0$ on $\S$ and Gauss formula, we conclude that $h_{\a\b}=\l g_{\a\b}$ for some smooth function $\l$, i.e., $\S$ is umbilic.
\qed

\

\noindent{\it Proof of Theorem \ref{thm ineq} (ii).}
Consider the Neumann boundary value problem
   \begin{equation}
       \left\{
    \begin{array}[]{rlll}
        \di  \bd^D \phi&=&1&\mathrm{ in }\ \O,\\
       V^\g \phi_\nu&=& c &\mathrm{ on }\ \S,\\
    \end{array}
    \right.  
       \label{Neumann}
   \end{equation}
   where $c=\frac{\int_{\O}V^\tau}{\int_{\S}V^{\tau-\a}}$. The existence and uniqueness follows from the Fredholm alternative as in standard elliptic PDE theory. We will apply the solution $\phi$ of (\ref{Neumann}) to the integral formula (\ref{formula RF 1}). By using the Cauchy-Schwarz inequality, the equation and boundary condition in (\ref{Neumann}) and the curvature assumptions, we get
\begin{eqnarray*}
\frac{n-1}{n}\int_\O V^\tau d\O&\geq &\int_\S V^\tau H^D \<\bn^D\phi,\nu\>^2dA
\\&=& \int_\S V^\tau H^D V^{2\g-2\a}\phi_\nu^2dA
\\&=& c^2\int_\S V^{\tau-2\a} H^D dA.
\end{eqnarray*}
Inserting the value of $c$ we get the assertion.

 If the equality holds, we have 
\begin{align} \label{equa3}\bn^{2,D}\phi=\frac1n \bar g,\end{align}
and \begin{align} \label{equa4}(h-\g u_\nu g)(\nabla \phi,\nabla \phi)=0.\end{align}
Since by assumption $(h_{\a\b}-\g\frac{V_{\nu}}{V}g_{\a\b})>0$ on $\S$, it follows from \eqref{equa4} that $\phi=0$  on $\S$. 
Restricting \eqref{equa3} on $\S$, we see that $\S$ is umbilical.

\

\section{Poincar\'e type and Lichnerowicz type inequalities}\label{sec Lich}

Along the same line of the above results, we now prove a Poincare type inequality.  


\begin{theo}\label{Poincare}
 Let $(M^n, \bar g, V=e^u)$ be an $n$-dimensional Riemannian triple and $\a,\g\in \mathbb{R}$. Let $\O$ be a bounded domain in $M$ with smooth boundary $\S$. Assume  $Ric^{D}$ of $D=D^{\a, \g}$ is positive definite. For any $f\in C^\infty(M)$ and  $\tau=(n+1)\a+\g$, if one of the following alternatives holds,
         \begin{itemize}
         \item[(i)] $\S=\emptyset$ and $\di\int_{\O}f V^\tau d\O=0$;
             \item [(ii)] $\S\neq\emptyset$, $f\equiv 0$ on $\S$ and $H^D\ge0$; 
    \item [(iii)] $\S\neq \emptyset$, $\di\int_{\O}f V^\tau d\O=0$ and  $\S$ satisfies $h-\g u_\nu g\ge 0$.   
         \end{itemize}
        Then we have 
                \begin{align}
\frac{n}{n-1}\di\int_{\O}f^2V^\tau d\O\le \di\int_{\O}\left<{(\mathrm{Ric}^{D})}^{-1}\nb f,\nb f\right>V^\tau d\O.
            \label{Poincare case 1}
        \end{align} 
\end{theo}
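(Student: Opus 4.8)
The plan is to reduce \eqref{Poincare case 1} to the Reilly type integral formula \eqref{formula RF 1} by solving an auxiliary boundary value problem driven by $f$. Given $f$, I would solve $\bd^D\phi=f$ in $\O$, choosing the boundary condition according to the three alternatives: in case (i) there is no boundary and one solves on the closed manifold, where the compatibility condition $\int_\O fV^\tau\,d\O=0$ (which follows from the divergence structure of Lemma \ref{weight}) is exactly the hypothesis; in case (ii) I impose the Dirichlet condition $\phi=0$ on $\S$; in case (iii) I impose the Neumann condition $\phi_\nu=0$ on $\S$, whose solvability again forces $\int_\O fV^\tau\,d\O=0$, as assumed. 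Since $\bd^D$ is uniformly elliptic with smooth coefficients (see Definition \ref{notation}), standard elliptic theory provides a smooth solution $\phi$ in each case.

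With $\phi$ in hand, I would feed it into \eqref{formula RF 1}. Using $\bd^D\phi=f$ together with the trace (Newton) inequality $|\bn^{2,D}\phi|_{\bar g}^2\ge\frac1n(\bd^D\phi)^2=\frac1n f^2$, valid because $\bn^{2,D}\phi$ is symmetric by Lemma \ref{sym vf}, the interior part of the formula yields, once the boundary integral is shown to be nonnegative,
\begin{align*}
\frac{n-1}{n}\int_\O V^\tau f^2\,d\O \ge \int_\O V^\tau Ric^D(\nb^D\phi,\nb^D\phi)\,d\O.
\end{align*}
Separately, writing $f=\bd^D\phi=\div^D(\nb^D\phi)$, multiplying by $fV^\tau$ and integrating by parts through the divergence identity of Lemma \ref{weight} gives, modulo a boundary contribution,
\begin{align*}
\int_\O V^\tau f^2\,d\O = -\int_\O V^\tau\<\nb f,\nb^D\phi\>\,d\O.
\end{align*}
I would then apply the pointwise generalized Cauchy--Schwarz inequality $\<\nb f,\nb^D\phi\>\le \<(Ric^D)^{-1}\nb f,\nb f\>^{1/2}\,Ric^D(\nb^D\phi,\nb^D\phi)^{1/2}$, legitimate since $Ric^D$ is symmetric positive definite, followed by H\"older's inequality and the previous display. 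Setting $A=\int_\O V^\tau f^2\,d\O$ and $B=\int_\O V^\tau\<(Ric^D)^{-1}\nb f,\nb f\>\,d\O$, this produces $A\le\big(\tfrac{n-1}{n}\big)^{1/2}B^{1/2}A^{1/2}$; dividing by $A^{1/2}$ and squaring gives $\frac{n}{n-1}A\le B$, which is precisely \eqref{Poincare case 1}.

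The step most likely to hide errors, and the one I would carry out most carefully, is the boundary bookkeeping, which must be checked case by case so that both the integration-by-parts term $\int_\S fV^\tau\<\nb^D\phi,\nu\>\,dA$ and the Reilly boundary integral in \eqref{formula RF 1} either vanish or carry the correct sign. In case (i) both are absent. In case (ii), the hypothesis $f\equiv0$ on $\S$ annihilates the integration-by-parts term, while $\phi=0$ on $\S$ forces $\n^D\phi=0$ there, so the Reilly boundary integral collapses to $\int_\S V^\tau H^D\<\nb^D\phi,\nu\>^2\,dA\ge0$ using $H^D\ge0$. In case (iii), $\phi_\nu=0$ makes both $\<\nb^D\phi,\nu\>$ and $\n^D(V^\g\phi_\nu)$ vanish on $\S$, leaving $\int_\S V^\tau(h-\g u_\nu g)(\n^D\phi,\n^D\phi)\,dA\ge0$ under $h-\g u_\nu g\ge0$. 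Verifying these sign conditions, alongside the solvability and compatibility of each boundary value problem, is the crux; the remaining manipulations are routine.
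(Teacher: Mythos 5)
Your proposal is correct and follows essentially the same route as the paper: solve $\bd^D\phi=f$ with the case-appropriate boundary condition, feed $\phi$ into the Reilly formula with the trace inequality $|\bn^{2,D}\phi|^2\ge\frac1n(\bd^D\phi)^2$, check the boundary terms case by case exactly as you describe, and close the argument with the integration by parts $\int_\O f^2V^\tau=-\int_\O V^\tau\<\bn^D\phi,\nb f\>$ followed by the weighted Cauchy--Schwarz/H\"older step. No gaps.
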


\begin{proof}
The proof is similar as in \cite{Mi} while we use $Ric^D$ in this paper.  In case (i), we solve PDE
            \begin{eqnarray}\label{closed Poincare}
\bd^D \phi=f \hbox{ in }\O.
\end{eqnarray}
            In case (ii), we solve the Dirichlet boundary value problem below,
\begin{equation}\label{Dirichlet Poincare}
\left\{
    \begin{array}[]{rlll}
        \di \bd^D \phi&=&f&\mathrm{ in }\ \O,\\
        \phi&=& 0 &\mathrm{ on }\ \S.\\
    \end{array}
    \right.
\end{equation}
In case (iii), we solve the Neumann boundary value problem 
\begin{equation}\label{Neumann Poincare}
\left\{
    \begin{array}[]{rlll}
        \di \bd^D \phi&=&f&\mathrm{ in }\ \O,\\
        \phi_{\nu}&=& 0 &\mathrm{ on }\ \S.\\
    \end{array}
    \right.
\end{equation}
Problems \eqref{closed Poincare} and \eqref{Neumann Poincare} are solvable since $\di\int_{\O}fV^\tau d\O=0$.


In all these three cases, we apply the integral formula (\ref{formula RF 1}) to the solutions of the PDEs, i.e. $\phi$ satisfying $\bd ^D \phi =f$. By Cauchy-Schwarz inequality, we have
\begin{equation} 
    \begin{array}[]{rll}
&    \di\frac{n-1}{n}\int_{\O}f^2V^\tau - \int_{\O}V^\tau Ric^{D}(\bn^D \phi, \bn^D \phi),\\ 
\ge&\di\int_{\S}V^{\tau}\left[H^D\<\bn^D \phi,\nu\>^2+\left(h-\g u_\nu g\right)(\n^D\phi, \n^D \phi)-2V^{-\g}\<\n^D \phi, \n^D(V^\g \phi_\nu)\>\right]dA.\label{equ extra 1} 
    \end{array} 
\end{equation} 
where the right hand side contains only boundary integrations. Next, we will show that in all these cases, the right hand side boundary integrations are nonnegative. 

In case (i), the boundary $\S=\emptyset$ and the result follows immediately. 

In case (ii), the first boundary integral in (\ref{equ extra 1}) is nonnegative since $H^D\ge0$. Recall from Definition \ref{notation} (i), $\n^D \phi=V^{\gamma -\alpha}\n \phi$ where $\n $ is the covariant derivative with respect to the induced metric of the boundary $\S$. Thus we have $\n^D\phi \equiv 0$, since $\phi\equiv 0$ on $\S$. 

In case (iii), $\phi_\nu\equiv 0$ on $\S$. We observe that $\left<\bar\n^D\phi, \nu\right>=V^{\g-\alpha}\phi_\nu\equiv 0$ and $\n^D(V^\g\phi_\nu)=V^{\g-\alpha}\n(V^\g\phi_{\nu})\equiv0$. Thus the first and last boundary terms in (\ref{equ extra 1}) are all zero. Under the condition that $h-\g u_\nu g\ge0$, we conclude that the right hand side boundary terms are all nonnegative. 

Equivalently, we have shown, in all the three cases, the following inequality holds, 
\begin{align}
    \di\frac{n-1}{n}\int_{\O}f^2V^\tau  \ge \int_{\O}V^\tau Ric^{D}(\bn^D \phi, \bn^D \phi).
    \label{Neumann Poincare equ 1}
\end{align}

On the other hand, by integration by parts and recall Definition \ref{notation}, also noting the divergence property \eqref{div}, we have
\begin{align}
    \begin{array}[]{rll}
    \di\int_{\O}f^2V^\tau  =&\di\int_{\O}fV^\tau \bd^D \phi=\int_{\O} f\bar\n(V^{\tau}\bar\n^D\phi )\\
\\
    =&\displaystyle\int_{\S} fV^{\tau}\bar\n^D_\nu\phi   -\int_{\O}V^{\tau}\<\bn^D \phi, \bar \n  f\>\\
    \\
    =&\displaystyle-\int_{\O}V^{\tau}\<\bn^D \phi, \bar \n  f\>,\\
    \end{array}
    \label{Neumann Poincare equ 1.2}
\end{align}
where the last identity holds due to the following observations: case(i), $\S=\emptyset$; case (ii), $f\equiv 0$ on $\S$; case (iii), $\phi_\nu\equiv 0$ on $\S$.

Applying H\"older's inequality to (\ref{Neumann Poincare equ 1.2}), we obtain
\begin{eqnarray}    \label{Neumann Poincare equ 2}
    \di&&\left(\int_{\O}f^2V^\tau \right)^2 \le \left(\di\int_{\O}V^\tau Ric^{D}(\bn^D \phi,\bn^D\phi)\right)\left(\int_{\O}V^\tau \left<{(Ric^{D})}^{-1}\nb f,\nb f\right>\right).
\end{eqnarray}
Combining (\ref{Neumann Poincare equ 1}) and (\ref{Neumann Poincare equ 2}), we proved \eqref{Poincare case 1}.
\end{proof}

\

As a consequence, we get the Lichnerowicz type inequality for the first eigenvalue of $D$-Laplacian $\bd^D$. 

\noindent{\it Proof of Theorem \ref{thm ineq} (iii).} 
By using the divergence property \eqref{div}, One sees directly that the first eigenvalues have the following variational representation
  \begin{align}
             \lambda_1^{Dir} =\inf_{\{f\in C^1(\overline \O); f\not\equiv 0, f|_{\S}=0\}}\di\frac{\int_\O|\bar \n f|^2V^{\tau+\g-\a} d\O}{\int_M f^2V^\tau d\O},
             \label{eigenvalue}
         \end{align}
  \begin{align}
             \lambda_1^{Neu} =\inf_{\{f\in C^1(\overline \O); f\not\equiv 0, \int_\O f V^\tau=0\}}\di\frac{\int_M|\bar \n f|^2V^{\tau+\g-\a} d\O}{\int_M f^2V^\tau d\O}.
             \label{eigenvalue Neumann}
         \end{align}
         The assertion follows immediately from Theorem \ref{Poincare} above by using the curvature condition $Ric^D\ge (n-1)V^{\a-\g}\bar g$.
\qed

\

\noindent{\it Proof of Corollary \ref{coro}.}  Let $\a=0, \g=1$. It is sufficient to observe the following fact: the Dirichlet boundary problem for PDEs 
\begin{equation}
\left\{
    \begin{array}[]{rlll}
        \di \bd^D\phi &=&-\lambda_1^{Dir}
        \phi&\mathrm{ in }\ \O,\\
        \phi&=& 0 &\mathrm{ on }\ \S,\\
    \end{array}
    \right.
\end{equation}
and
\begin{equation}
\left\{
    \begin{array}[]{rlll}
        \di V\bar \De f-\bar \De Vf&=&-\lambda_1^{Dir}
       f&\mathrm{ in }\ \O,\\
        f&=& 0 &\mathrm{ on }\ \S,\\
    \end{array}
    \right.
\end{equation} are equivalent under the correspondence $\phi=\frac{f}{V}$.
The same phenomenon holds for Neumann boundary problems. Then Corollary \ref{coro} follows from Theorem \ref{thm ineq} (iii).
\qed

\

\section{Appendix: Proof of Proposition \ref{thm Ricci}}\label{appenA}

In this appendix we prove Proposition \ref{thm Ricci}. We only need to prove it at any fixed point $P$ under  normal coordinates system $\{\P_i\}_{i=1}^n$ such that $\bar g_{ij}(P)=\delta_{ij}$, $\P_k\bar g_{ij}(P)=0$, and $\nb_{\P_i}\P_j(P)=0$. Throughout the proof we will use these properties implicitly. We use $u_i:=du(\P_i)$ and let $u_{ij}$ be the Hessian of $u$ with respect to $\nb$. 

    By definition, we first observe that
    \begin{align}
        D_i\P_j=\nb_i\P_j+\a u_i\P_j+\a u_j\P_i+\g \d_{ij}\nb u,
        \label{thm Ricci equ1}
    \end{align}
    and 
    \begin{align}
        D_k(D_i\P_j)=&\nb_k(D_i\P_j)+\a u_k(D_i\P_j)+\a du(D_i\P_j)\P_k+\g \bar g(\P_k,D_i\P_j)\nb u.
        \label{thm Ricci equ2}
    \end{align}
    Next, we compute each term on the right of (\ref{thm Ricci equ2}). It turns out
    \begin{align}
        \nb_k(D_i\P_j)=&\nb_k(\nb_i\P_j)+\a u_{ik}\P_j+\a u_{jk}\P_i+\g \d_{ij}\nb_k\nb u,\nonumber\\
        \a u_k (D_i\P_j)=&\a^2u_ku_i\P_j+\a^2 u_ku_j\P_i+\a\g \d_{ij}u_k\nb u,\nonumber\\
        \a du(D_i\P_j)\P_k=& 2\a^2u_iu_j\P_k+\a\g \d_{ij}|\nb u|^2\P_k,\nonumber\\
        \g \bar g(\P_k,D_i\P_j)\nb u=&(\a\g u_i \d_{jk}+\a\g u_j\d_{ik})\nb u +\g^2\d_{ij}u_k\nb u.
        \label{thm Ricci equ3}
    \end{align}
Adding up these four terms into \eqref{thm Ricci equ2}, we have
\begin{align}
      D_k(D_i\P_j)=&\nb_k(\nb_i\P_j)+\a u_{ik}\P_j+\a u_{jk}\P_i+\g \d_{ij}\nb_k\nb u\nonumber\\
&+\a^2u_ku_i\P_j+\a^2 u_ku_j\P_i+\a\g \d_{ij}u_k\nb u\nonumber\\
 &+ 2\a^2u_iu_j\P_k+\a\g \d_{ij}|\nb u|^2\P_k\nonumber\\
  &+(\a\g u_i \d_{jk}+\a\g u_j\d_{ik})\nb u +\g^2\d_{ij}u_k\nb u.
\label{thm Ricci equ4}
\end{align}
Using the metric tensor $\bar g$, we have
\begin{align}
    \left< D_k(D_i\P_j),\P_k\right>_{\bar g}=&\left< \nb_k(\nb_i\P_j),\P_k\right>_{\bar g}+2\a u_{ij}+\g \bar\De u \d_{ij}\nonumber\\
    &+\big[\a^2+(2n+1)\a^2+2\a\g\big]u_iu_j\nonumber\\
    &+\big[(n+1)\a\g+\g^2\big]|\nb u|^2 \d_{ij}.
    \label{thm Ricci equ5}
\end{align} 
By swapping $k$ and $i$ in (\ref{thm Ricci equ4}), we obtain
\begin{align}
D_i(D_k\P_j)=&\nb_i(\nb_k\P_j)+\a u_{ik}\P_j+\a u_{ji}\P_k+\g \d_{kj}\nb_i\nb u\nonumber\\
&+\a^2u_ku_i\P_j+\a^2 u_iu_j\P_k+\a\g \d_{kj}u_i\nb u\nonumber\\
 &+ 2\a^2u_ku_j\P_i+\a\g \d_{kj}|\nb u|^2\P_i\nonumber\\
  &+(\a\g u_k \d_{ij}+\a\g u_j\d_{ik})\nb u +\g^2\d_{kj}u_i\nb u.
  \label{thm Ricci equ4.1}
\end{align}
Similarly, we have
\begin{align}
    \left< D_i(D_k\P_j),\P_k\right>_{\bar g} =&     \left< \nb_i(\nb_k\P_j),\P_k\right>_{\bar g}+\big[(n+1)\a+\g\big]u_{ij}+2\a\g|\nb u|^2\d_{ij}\nonumber\\ 
   &+\big[(n+3)\a^2+2\a\g+\g^2\big]u_iu_j.
  \label{thm Ricci equ5.1}
\end{align}
Combining (\ref{thm Ricci equ5}) and (\ref{thm Ricci equ5.1}), using Definition \ref{defi Ricci} we have
\begin{align}
  R^D_{ij}= &\left< R^D(\P_k,\P_i)\P_j,\P_k\right>_{\bar g} \nonumber\\ 
    =&    \left< D_k(D_i\P_j),\P_k\right>_{\bar g}   - \left< D_i(D_k\P_j),\P_k\right>_{\bar g} \nonumber\\
    =&R_{ij}-\big[\a(n-1)+\g\big]u_{ij}+\big[(n-1)\a^2-\g^2\big]u_iu_j\nonumber\\
&+\g \bar\De u\d_{ij}+\big[(n-1)\a\g+\g^2\big]|\nb u|^2\d_{ij}.
    \label{thm Ricci equ 6}
\end{align}
   This finishes the proof for $Ric^D$. \\ 
\qed
\

\bigskip
{\bf Acknowledgement:} Part of the work was done during both authors'€˜ visit at McGill University in Fall, 2015. We would like to thank the department for the hospitality and professor Pengfei Guan for constant support. The authors also would like to thank the referee for his critical reading and  valuable suggestions.

\

\bibliographystyle{amsplain}

\end{document}